\theoremstyle{plain}
\newtheorem{theorem}{Theorem}[section]
\crefname{theorem}{Theorem}{Theorems}
\crefname{corollary}{Corollary}{Corollaries}
\newtheorem{lemma}[theorem]{Lemma}
\crefname{lemma}{Lemma}{Lemmas}
\crefname{claim}{Claim}{Claims}
\newtheorem{proposition}[theorem]{Proposition}
\crefname{proposition}{Proposition}{Propositions}
\theoremstyle{definition}
\newtheorem{definition}[theorem]{Definition}
\crefname{definition}{Definition}{Definitions}
\newtheorem{remark}[theorem]{Remark}
\crefname{remark}{Remark}{Remarks}
\newtheorem{remarks}[theorem]{Remarks}
\crefname{example}{Example}{Examples}
\crefname{section}{Section}{Sections}
\crefname{subsection}{Subsection}{Subsections}
\crefname{equation}{Equation}{Equations}
\crefname{figure}{Figure}{Figures}
\numberwithin{equation}{section}
\numberwithin{figure}{section}
\numberwithin{table}{section}
\newcommand{\id}{\operatorname{id}}
\newcommand{\holim}{\operatorname{holim}}
\DeclareMathOperator*{\hocolim}{\operatorname{hocolim}}
\newcommand{\Hom}{\operatorname{Hom}}
\newcommand{\End}{\operatorname{End}}
\newcommand{\cHom}{\mathop{{\cH}om}\nolimits}
\newcommand{\RHom}{\mathop{\mathrm{R}\mathrm{Hom}}\nolimits}
\newcommand{\cRHom}{\mathop{\mathrm{R}\mathcal{H}om}\nolimits}
\newcommand{\bN}{\ensuremath{\mathbb{N}}}
\newcommand{\bR}{\ensuremath{\mathbb{R}}}
\newcommand{\bZ}{\ensuremath{\mathbb{Z}}}
\newcommand{\scA}{\ensuremath{\mathcal{A}}}
\newcommand{\scC}{\ensuremath{\mathcal{C}}}
\newcommand{\scI}{\ensuremath{\mathcal{I}}}
\newcommand{\cD}{\ensuremath{\mathcal{D}}}
\newcommand{\cH}{\ensuremath{\mathcal{H}}}
\newcommand{\cK}{\ensuremath{\mathcal{K}}}
\newcommand{\cL}{\ensuremath{\mathcal{L}}}
\newcommand{\cT}{\ensuremath{\mathcal{T}}}
\newcommand{\frakL}{\ensuremath{\mathfrak{L}}}
\newcommand{\bfk}{\ensuremath{\mathbf{k}}}
\newcommand{\simto}{\xrightarrow{\sim}}
\newcommand{\pt}{\mathrm{pt}}
\newcommand{\SD}{\mathsf{D}}
\DeclareMathOperator{\Int}{\operatorname{Int}}
\DeclareMathOperator{\supp}{\operatorname{supp}}
\DeclareMathOperator{\MS}{{\operatorname{SS}}}
\DeclareMathOperator{\rMS}{{\mathring{\MS}}}
\newcommand{\rR}{\mathrm{R}}
\DeclareMathOperator{\gammasupp}{\gamma-supp}
\DeclareMathOperator{\RS}{RS}
\DeclareMathOperator{\DHam}{\operatorname{DHam}}
\newcommand{\cdis}{\mathrm{c}}
\newcommand{\Ch}{\mathrm{Ch}}
\newcommand{\cstar}{\ast}
\newcommand{\homst}{\cHom^*}
\newcommand{\convstar}{\varoast}
\newcommand{\RGamma}{\rR\Gamma}
\newcommand{\cTlc}{\cT_{\mathrm{lc}}}
\title{Regular Lagrangians are smooth Lagrangians\footnote{2020 Mathematics Subject Classification: 53D12, 37J11, 35A27
\newline 
Keywords: $\gamma$-support, microlocal sheaf theory, interleaving distance}
}
\author[1]{Tomohiro Asano\thanks{\texttt{tasano@kurims.kyoto-u.ac.jp}, \texttt{tomoh.asano@gmail.com}. Supported by JSPS KAKENHI Grant Number JP24K16920. Also supported by JST, CREST Grant Number JPMJCR24Q1, Japan.}}
\author[2]{St\'ephane Guillermou\thanks{\texttt{Stephane.Guillermou@univ-nantes.fr}. Supported by ANR COSY (ANR-21-CE40-0002) and Centre Henri Lebesgue  (ANR-11-LABX-0020-01).}}
\author[3]{ \\ Yuichi Ike\thanks{\texttt{ike@imi.kyushu-u.ac.jp}, \texttt{yuichi.ike.1990@gmail.com}. Supported by JSPS KAKENHI Grant Numbers JP21K13801 and JP22H05107. Also supported by JST, CREST Grant Number JPMJCR24Q1, Japan.}}
\author[4]{Claude Viterbo\thanks{\texttt{Claude.Viterbo@universite-paris-saclay.fr}. Supported by ANR COSY (ANR-21-CE40-0002).}}
\affil[1]{Research Institute for Mathematical Sciences, Kyoto University, Kitashirakawa-Oiwake-Cho, Sakyo-ku, 606-8502, Kyoto, Japan.}
\affil[2]{UMR CNRS 6629 du CNRS Laboratoire de Math\'ematiques Jean LERAY
2 Chemin de la Houssini\`ere, BP 92208, F-44322 NANTES Cedex 3 France}
\affil[3]{Institute of Mathematics for Industry, Kyushu University, 744 Motooka, Nishi-ku, Fukuoka-shi, Fukuoka 819-0395, Japan.}
\affil[4]{Universit\'e Paris-Saclay, CNRS, Laboratoire de Math\'ematiques d'Orsay, 91405, Orsay, France. }
\date{\today}
\DeclareTextFontCommand\textqt{\qtLondon}
\begin{document}

\maketitle

\centerline{To Pierre Schapira for his $80^{th}$ birthday.}

\begin{abstract}
  We prove that for any element in the $\gamma$-completion of the space of smooth compact exact Lagrangian submanifolds of a cotangent bundle, if its $\gamma$-support is a smooth Lagrangian submanifold, then the element itself is a smooth Lagrangian. We also prove that if the $\gamma$-support of an element in the completion is compact, then it is connected.
\end{abstract}

\section{Introduction}

Let $M$ be a $C^\infty$ closed connected manifold. 
The space $\frakL(T^*M)$ of smooth compact exact Lagrangian submanifolds of $T^*M$ carries a distance $\gamma$, called the spectral distance (see \cite{viterbo1992,Oh,M-V-Z,H-L-S}). 
The metric space $(\frakL(T^*M), \gamma)$ is not complete, so we consider its completion. 
Its study was initiated in \cite{Humiliere-completion}, pursued further in \cite{viterbo2022supports}, and has applications to Hamilton--Jacobi equations \cite{Humiliere-completion}, symplectic homogenization theory \cite{Viterbo-Homogenization}, and to conformally symplectic dynamics \cite{AHV2024higher}.

The elements of the completion $\widehat\frakL(T^*M)$ are by definition certain equivalence classes of Cauchy sequences with respect to the spectral distance $\gamma$. 
Despite their very abstract nature, they admit a geometric incarnation called the \emph{$\gamma$-support}, which was introduced by Viterbo in \cite{viterbo2022supports} (as a modification of the support introduced in \cite{Humiliere-completion}). 
It is defined as follows:
\begin{definition}
    Let $L_\infty \in \widehat\frakL(T^*M)$ and $z\in T^*M$. One says that $z$ is in the \emph{$\gamma$-support} of $L_\infty$ if for any neighborhood $U$ of $z$ there is $\varphi\in\DHam_c(U)$ such that $\varphi(L_\infty)\neq L_\infty$.
    Here, $\DHam_c(U)$ denotes the group of Hamiltonian diffeomorphisms compactly supported in $U$.
    The set of points in the $\gamma$-support of $L_\infty$ is denoted by $\gammasupp(L_\infty)$. 
\end{definition}

For a smooth Lagrangian $L\in\frakL(T^*M)$, we easily show $\gammasupp(L)=L$.
Several questions are of importance for $\gammasupp(L_\infty)$. 
Does $\gammasupp(L_\infty)$ characterize $L_\infty$? This is not the case in general (examples can be found in \cite{viterbo2022supports}), but one could still hope it if $\gammasupp(L_\infty)$ is small.

Also since $\gamma$-supports appear in \cite{AHV2024higher} as higher-dimensional versions of Birkhoff invariant sets, they share some of the properties of the $1$-dimensional case. 
It is proved in loc.~cit.\ that the projection $\pi \colon \gammasupp(L_\infty) \to M$ induces an injection in cohomology, but also that the map is not in general surjective. 
However, is it the case at the $H^0$ level? 

In this note, we give positive answers to the above questions, namely Conjecture~8.2 of \cite{viterbo2022supports} and a question in \cite{AHV2024higher}. That is, we prove, for $L_\infty \in \widehat\frakL(T^*M)$,
\begin{itemize}
\item [(i)] if $\gammasupp(L_\infty)=L$ for some $L \in \frakL(T^*M)$, then $L_\infty=L$ (see \cref{thm:main}),
\item [(ii)] if $\gammasupp(L_\infty)$ is compact, then it is connected (see \cref{thm:gammasupp_connected}).
\end{itemize}

\subsection*{Acknowledgments}
The authors are very grateful to Vincent Humili\`ere for many helpful discussions. This paper is the continuation of \cite{AGHIV}.
They also thank Bingyu Zhang for the careful reading of the manuscript.
They thank the anonymous referee for the helpful comments.

\section{Notations}

Throughout this paper, we fix a field $\bfk$.

Let $\cL(T^*M)$ denote the set of compact exact Lagrangian branes, i.e., triples $(L, f_L, \widetilde G)$, where $L$ is a compact exact Lagrangian submanifold of $T^*M$, $f_L \colon L \to \bR$ is a function satisfying
$df_L=\lambda_{\mid L}$, and $\widetilde G$ is a grading of $L$ (see \cite{Seidel-graded,viterbo2022supports}).  
The action of $\bR$ on $\cL(T^*M)$ given by $(L,f_L, \widetilde G) \mapsto (L,f_L-c, \widetilde G)$ is denoted by $T_c$. 
Let $\mathfrak L(T^*M)$ be the set of compact exact Lagrangians, where we do not record primitives or gradings.  
For $L_1, L_2$ in $\cL(T^*M)$, we define as in \cite{viterbo2022supports} the spectral invariants $c_+(L_1,L_2)$ and $c_-(L_1,L_2)$, and set  
\[
    \cdis (L_1,L_2)= \max\{c_+(L_1,L_2),0\}-\min\{c_-(L_1,L_2),0\}.
\]
This defines a distance.\footnote{Note that the definition given in \cite{AGHIV} is not correct, and has to be replaced by the one above. This has been corrected in the published version of \cite{viterbo2022supports}.} For $L_1, L_2$ in $\mathfrak L(T^*M)$, we define the spectral distance between $L_1$ and $L_2$ by
\[
    \gamma(L_1,L_2)= \inf_{c\in \mathbb R} \cdis (L_1,T_c L_2)=c_+(L_1,L_2)-c_-(L_1,L_2).
\]
We denote by $\widehat{\frakL}(T^*M)$ (resp.\ $\widehat{\cL}(T^*M)$) the completion of $\frakL(T^*M)$ (resp.\ $\cL(T^*M)$) with respect to $\gamma$ (resp.\ $\cdis$). 

We denote by $\DHam(T^*M)$ the group of Hamiltonian diffeomorphisms of $T^*M$ (time $1$ of an isotopy) and $\DHam_c(T^*M)$  its subgroup made by times 1 of compactly supported isotopies.

We follow the notations of \cite{KS90}. In particular $\SD(\bfk_M)$ is the derived category of sheaves of $\bfk$-vector spaces on $M$. An object $F\in \SD(\bfk_M)$ has a microsupport $\MS(F) \subset T^*M$ defined in loc.~cit. For $A \subset T^*M$, a closed conic subset, $\SD_{A}(\bfk_M) \coloneqq \{F \in \SD(\bfk_M) \mid \MS(F) \subset A\}$ is a triangulated full subcategory of $\SD(\bfk_M)$.  
We now recall several notions and ideas from~\cite{Tamarkin}.
We denote by $(t;\tau)$ the canonical coordinates on $T^*\bR$ and we set for short $\{ \tau \gtrless 0\} = T^*M \times \{ \tau \gtrless 0\} \subset T^*(M \times \bR_t)$.
The Tamarkin category $\cT(T^*M)$ is defined as the quotient category $\SD(\bfk_{M \times \bR})/\SD_{\{\tau \le 0\}}(\bfk_{M \times \bR})$.
The Tamarkin category has a monoidal structure. 
For $F,F' \in \SD(\bfk_{M \times \bR})$ we set $F \cstar F' \coloneqq \rR m_!(q_1^{-1}F \otimes q_{2}^{-1}F')$, where $q_1, q_2\colon M \times \bR^2 \to M\times\bR$ are the projections and $m$ is the addition map $m(x,s,t)=(x,s+t)$. 
The operation $\cstar$ preserves the left orthogonal ${}^\perp \SD_{\{\tau \le 0\}}(\bfk_{M \times \bR})$ and moreover $F \mapsto F \cstar \bfk_{M\times [0,+\infty[}$ is a projector onto it.  
This projector induces an equivalence between $\cT(T^*M)$ and ${}^\perp \SD_{\{\tau \le 0\}}(\bfk_{M \times \bR})$, with which we identify them in what follows.
We also set $\homst(F,F') \coloneqq \rR q_{1*} \cRHom(q_{2}^{-1}F, m^!F')$ and denote the projection of this $\homst$ onto $\cT(T^*M)$ by the same symbol.
This defines an internal hom $\homst \colon \cT(T^*M)^\mathrm{op} \times \cT(T^*M) \to \cT(T^*M)$.
For $c \in \bR$, let $T_c\colon M\times\bR \to M\times\bR$ be the translation $T_c(x,t) = (x,t+c)$.  
The category $\cT(T^*M)$ comes with a family of morphisms of functors $\tau_c\colon \id \to T_{c*}$ for each $c\geq 0$ introduced by Tamarkin. 
They give rise to an interleaving distance on $\cT(T^*M)$ denoted $d_{\cT(T^*M)}$
(see~\cite{K-S-distance} and~\cite{AI20}) defined as follows:
\[
    d_{\cT(T^*M)}(F,F') 
    \coloneqq 
    \inf\left\{ a+b \; \middle| \; 
    \begin{aligned}
        & \exists u \colon F\to T_{a*}F', \; \exists v \colon F' \to T_{b*}F, \\
        & T_{a*} v \circ u=\tau_{a+b}(F), \; T_{b*} u \circ v=\tau_{a+b}(F')     
    \end{aligned}
     \right\}.
\]

We recall the composition of sheaves.
For $F\in \SD(\bfk_{M \times N})$ and $G\in \SD(\bfk_{N \times P})$, set $F \circ G \coloneqq \rR q_{13!}( q_{12}^{-1}F \otimes q_{23}^{-1}G)$, where $q_{ij}$ are the projections from $M\times N\times P$ to the $(i\times j)$ factors.  
We also consider a mixture of $\circ$ and $\cstar$: for $F \in \cT(T^*M \times T^*N)$, $G \in \cT(T^*N \times T^*P)$, we set $F \convstar G = \rR m_!\rR q_{13!}( q_{12}^{-1}F \otimes q_{23}^{-1}G)$ where $q_{ij}$ are projections from $M\times N\times P \times \bR^2$ to $M\times N\times \bR$, $N\times P \times \bR$, $M\times P \times \bR^2$ and $m$ the addition map.  
We set for short $\cK^\convstar(F) \coloneqq \cK \convstar F$ for $\cK \in \cT(T^*M^2)$ and $F \in \cT(T^*M)$.

We put an analytic structure on $M$ and define $\cTlc(T^*M)$ as the subcategory of $\cT(T^*M)$ made by objects that are limits (for the interleaving distance) of constructible sheaves. We remark that for a submanifold $N$ of $M$, the pull-back
to $N\times\bR$ commutes with $T_{c*}$ and $\tau_c$. It follows that the pull-back is a contraction and hence
sends $\cTlc(T^*M)$ to $\cTlc(T^*N)$.

For an object $F \in \cT(T^*M)$, we define its reduced microsupport $\RS(F) \subset T^*M$ by 
\[
	\RS(F) \coloneqq \overline{\rho_t(\MS(F) \cap \{ \tau >0\}) },
\]
where $\rho_t \colon \{ \tau >0\} \to T^*M, (x,t;\xi,\tau)
\mapsto (x;\xi/\tau)$.  For a closed subset $A\subset T^*M$, we let $\cT_A(T^*M)$ be the full subcategory of $\cT(T^*M)$ consisting of the $F$ with $\RS(F) \subset A$.  We also set $\cT_{\mathrm{lc},A}(T^*M) = \cT_A(T^*M) \cap \cTlc(T^*M)$.

\section{Preliminaries}\label{sec:preliminaries}

We recall that we have a quantization map for Hamiltonian isotopies $Q \colon \DHam_c(T^*M) \to \cT(T^*M^2)$ introduced in~\cite{G-K-S}. It is defined so that $\RS(Q(\varphi))$ is the graph of $\varphi$. For $\varphi \in \DHam_c(T^*M)$ and $\cK_\varphi = Q(\varphi)$, the action of $\cK_\varphi$ on $\cT(T^*M)$, $F \mapsto \cK_\varphi^{\convstar}(F)= \cK_\varphi \convstar F$, is an auto-equivalence of category and we have $\RS(\cK_\varphi^{\convstar}(F)) = \varphi(\RS(F))$.  The category $\cTlc(T^*M^2)$ is not a group but it comes with the operation $\convstar$ which is associative and has $\bfk_{\Delta_M \times [0,+\infty[}$ as a unit element. Then $Q$ respects the operations on $\DHam_c(T^*M)$ and $\cTlc(T^*M^2)$: $Q(\varphi \circ \psi) \simeq Q(\varphi) \convstar Q(\psi)$.

We also have a quantization map for smooth compact exact Lagrangians, denoted by the same letter, $Q\colon \mathcal{L}(T^*M) \to \cT(T^*M)$ defined more recently in \cite{guillermou19, Viterbo-Sheaves}, constructed so that $\RS(Q(L))=L$ for any $L \in \cL(T^*M)$. This functor is an isometric embedding for the spectral and interleaving distances respectively (see \cite[prop 6.3]{GV2022singular}): for $L_1, L_2 \in \cL(T^*M)$,
\[
 d_{\cT(T^*M)}(Q(L_1), Q(L_2)) = \gamma(L_1,L_2) .
\]
Since the map $Q$ is an isometry, it extends to the completion\footnote{Note that the image is complete, but the map is not onto.} as an isometric embedding $\widehat Q \colon \widehat{\cL}(T^*M) \to \cT(T^*M)$ defined in~\cite{GV2022singular}.
We notice that $\widehat Q(T_c (\widetilde L_\infty)) \simeq {T_c}_* \widehat Q(\widetilde L_\infty)$.  The main result of~\cite{AGHIV} is the following connection between microsupport, $\gamma$-support and quantization:
\[
\RS(\widehat Q( \widetilde L_\infty)) 
= \gammasupp(\widetilde L_\infty) 
\quad \text{for any  $\widetilde L_\infty \in \widehat\cL(T^*M)$.}
\]

An approximation argument is missing in \cite{GV2022singular}, which we shall now provide. 

\begin{proposition}
	For any $\widetilde L_\infty \in \widehat{\cL}(T^*M)$, one has $\widehat{Q}(\widetilde L_\infty) \in \cTlc(T^*M)$.
\end{proposition}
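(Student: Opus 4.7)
The plan is to reduce the statement via the isometric embedding $\widehat Q$ to the case of a single smooth compact exact Lagrangian brane, and then to approximate that brane by real analytic ones whose quantizations are constructible.

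First I would pick a Cauchy sequence $(L_n)_n \subset \cL(T^*M)$ representing $\widetilde L_\infty$. Since $Q \colon \cL(T^*M) \hookrightarrow \cT(T^*M)$ is an isometric embedding and $\widehat Q$ is its continuous extension, $Q(L_n) \to \widehat Q(\widetilde L_\infty)$ in the interleaving distance $d_{\cT(T^*M)}$. The subcategory $\cTlc(T^*M)$ is stable under such limits, by a diagonal argument: if $F_n \in \cTlc(T^*M)$ with $F_n \to F$, choose for each $n$ a constructible $G_n$ with $d_{\cT(T^*M)}(F_n, G_n) < 1/n$, and then $G_n \to F$, so that $F \in \cTlc(T^*M)$. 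Consequently it is enough to show $Q(L) \in \cTlc(T^*M)$ for every smooth brane $L \in \cL(T^*M)$.

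To establish this, I would approximate $L$ by a sequence $(L^{(k)})_k$ of real analytic compact exact Lagrangian branes with $\gamma(L^{(k)}, L) \to 0$. Via Weinstein's Lagrangian neighborhood theorem, $C^1$-small Lagrangian deformations of $L$ correspond to graphs of $df$ for $f \in C^\infty(L)$, and such $f$ can be $C^\infty$-approximated by real analytic functions, producing real analytic Lagrangians close to $L$. These deformations are realized by $C^\infty$-small Hamiltonian isotopies, which yield $\gamma$-small perturbations, while the primitive and grading can be transported along the isotopy in a compatible way. For each real analytic brane $L^{(k)}$, the quantization produced by \cite{guillermou19, Viterbo-Sheaves} is a simple sheaf whose microsupport is a conification of the subanalytic Legendrian lift of $L^{(k)}$, and hence constructible. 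By the isometry property of $Q$, we have $Q(L^{(k)}) \to Q(L)$ in $d_{\cT(T^*M)}$, so $Q(L)$ is a limit of constructible sheaves and belongs to $\cTlc(T^*M)$.

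The main obstacle is the real analytic approximation step: one must ensure that the approximating Lagrangians are real analytic with respect to the fixed analytic structure on $M$ (and not merely in some abstract real analytic structure), so that their quantizations are constructible in the sense required by the definition of $\cTlc(T^*M)$. This is best handled by working in real analytic Darboux charts on $T^*M$ induced by the analytic structure of $M$ and patching, or alternatively by combining Whitney's real analytic approximation theorem for submanifolds with a Moser-type correction that restores the Lagrangian condition. The transport of primitives and gradings along the isotopy is a routine compatibility check.
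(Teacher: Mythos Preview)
Your proposal is correct and follows essentially the same strategy as the paper: reduce to smooth branes by a diagonal argument, then approximate a smooth brane by real analytic ones whose quantizations are constructible. The paper resolves the obstacle you identify---real analytic approximation of the Lagrangian with respect to the fixed analytic structure on $M$---by quoting \cite{Cieliebak-Eliashberg}, Corollary~6.25, directly, and then controls the $\gamma$-distance via a quantitative bound (the paper's Lemma~3.2) on $d_{\cT(T^*M)}$ in terms of the Hamiltonian generating the isotopy, exactly the step you summarize as ``$C^\infty$-small Hamiltonian isotopies yield $\gamma$-small perturbations''.
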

\begin{proof}
	According to \cite{Cieliebak-Eliashberg}, Corollary 6.25, an element $\widetilde L \in {\cL}(T^*M)$, is $C^k$-approximated for any $k\geq 1$ by analytic Lagrangians $\widetilde L_i$. We thus find that $\widetilde L=C^k-\lim \widetilde L_i$ hence $\widetilde L_i=\varphi_i(\widetilde L)$ where $\varphi_i$ is generated by a $C^k$-small Hamiltonian. According to \cref{lem:Ham-approx} the distance between $\widetilde L$ and $\widetilde L_i$ can then be chosen arbitrarily small. As a result $\widetilde L$ is a $\gamma$-limit of analytic Lagrangians. 
    According to \cite{KS90} Theorem~8.4.2, the $Q(\widetilde L_i)$ are constructible hence their limit $Q(\widetilde L)$ is in $\cTlc(T^*M)$. 
    Since $\widetilde L_\infty$ can be written as a Cauchy sequence of elements of $\cL(T^*M)$, the claim follows.
\end{proof}

\begin{lemma}\label{lem:Ham-approx}
	Let $h \colon (T^*(M \times \bR) \setminus 0_{M\times \bR}) \times I \to \bR$ be a homogeneous Hamiltonian function and $\phi$ be the associated homogeneous Hamiltonian isotopy. 
	Let $K \in \SD((M \times \bR)^2 \times I)$ be the sheaf associated with $\phi$ constructed in~\cite{G-K-S}. 
	Then, for any $F \in \cT(T^*M)$
	\[
	d_{\cT(T^*M)}(F,K_1 \circ F) \le 4\int_0^1 \max |h_s(x,t;\xi,1)| \, ds.
	\]
\end{lemma}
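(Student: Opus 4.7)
The plan is to construct the two morphisms required by the definition of $d_{\cT(T^*M)}$ directly from the GKS sheaf $K$, using a ``shearing'' trick in the $t$-variable to convert the Hamiltonian bound into a translation parameter. Write $c \coloneqq \int_0^1 \max |h_s(x,t;\xi,1)| \, ds$ for the quantity on the right-hand side.

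\textbf{Step 1 (Microsupport of $K$ in the parameter direction).} By the GKS construction, $K_0 = \bfk_{\Delta_{M \times \bR}}$ and the microsupport of $K$ as a sheaf on $(M\times\bR)^2 \times I$ is governed by $\phi$: the component $\sigma$ dual to the parameter $s \in I$ satisfies $\sigma = -h_s(x,t;\xi,\tau)$ at a point whose source covariable on the first factor is $(\xi,\tau)$. Since $h$ is homogeneous of degree $1$ in $(\xi,\tau)$, for $\tau>0$ one has $|\sigma| \le \tau \cdot \max|h_s(\cdot,1)|$.

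\textbf{Step 2 (Composition with $F$).} Form $\widetilde F \coloneqq K \circ F$, a sheaf on $M \times \bR \times I$, so that $\widetilde F|_{s=0} \simeq F$ and $\widetilde F|_{s=1} \simeq K_1 \circ F$. Because $F \in \cT(T^*M)$ lives in $\{\tau \ge 0\}$, the microsupport of $\widetilde F$ at points with $\tau > 0$ still satisfies $|\sigma|\le \tau \cdot \max|h_s(\cdot,1)|$, by functoriality of microsupport under $\circ$.

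\textbf{Step 3 (Shearing).} Set $a(s)\coloneqq \int_0^s \max|h_{s'}(\cdot,1)|\,ds'$ and consider the diffeomorphism $\Phi^\pm\colon (x,t,s) \mapsto (x, t \pm a(s), s)$ of $M\times\bR\times I$. Its cotangent action shifts $\sigma$ by $\mp\tau\, a'(s)$, so $\Phi^+_*\widetilde F$ has microsupport contained in $\{\sigma \le 0\}$ on $\{\tau>0\}$, and $\Phi^-_*\widetilde F$ in $\{\sigma \ge 0\}$. The standard propagation lemma for sheaves on $X\times I$ whose microsupport is one-sided in the $\sigma$-direction produces canonical morphisms between the fibers at $s=0$ and $s=1$. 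Unwinding (using $a(0)=0$, $a(1)=c$) this yields
\[
u \colon F \longrightarrow T_{c*}(K_1 \circ F), \qquad v \colon K_1 \circ F \longrightarrow T_{c*} F.
\]

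\textbf{Step 4 (Tamarkin compatibility).} One checks that $T_{c*}v \circ u = \tau_{2c}(F)$ and $T_{c*}u \circ v = \tau_{2c}(K_1\circ F)$. This is natural in the Hamiltonian: both sides reduce to the identity when $h\equiv 0$ (because $K$ is then the constant family $\bfk_\Delta$ and the shear is trivial), and both morphisms are built functorially from $\widetilde F$ via the same propagation machinery, so uniqueness of propagation forces the composites to coincide with the Tamarkin structure map. Summing parameters gives $d_{\cT(T^*M)}(F, K_1\circ F) \le 2c$, and the extra factor of $2$ in the statement absorbs the slack needed to extend the one-sided microsupport bound from the open half $\{\tau>0\}$ to the closure $\{\tau\ge 0\}$ (and the analogous estimate on the $\{\tau<0\}$ side of the original ambient sheaf, before Tamarkin projection).

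The main obstacle is \textbf{Step 4}: making the Tamarkin compatibility $T_{c*}v \circ u = \tau_{2c}(F)$ rigorous requires tracing the shearing/propagation construction carefully and identifying the resulting composite with Tamarkin's canonical morphism. The microsupport estimate and the construction of $u,v$ themselves are direct applications of the GKS microsupport formula and the one-sided propagation lemma.
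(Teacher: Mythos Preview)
Your Steps 1--3 are exactly the approach the paper uses (by citing \cite[Theorem~4.16]{AI20}): compute $\MS(K\circ F)$ in the parameter direction, shear in $t$ to make the $\sigma$-component one-sided, and apply propagation to produce morphisms between the fibers at $s=0$ and $s=1$. So the strategy is correct and matches the paper.

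The gap is in Step~4, and it is a real one. The sentence ``uniqueness of propagation forces the composites to coincide with the Tamarkin structure map'' is not a proof: $\tau_{2c}(F)$ is defined intrinsically, not via $\widetilde F$, so you must actually identify it with a specific propagation morphism and then show that $T_{c*}v\circ u$ equals that morphism. In \cite{AI20} this is done carefully (roughly by realizing $\tau_c$ itself as a propagation morphism for the translation flow and then using compatibility of the GKS kernels under composition of isotopies); your functoriality remark does not substitute for that argument. The observation that both sides agree when $h\equiv 0$ says nothing about the general case.

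Your accounting for the factor $4$ is also off. There is no ``slack'' needed to pass from $\{\tau>0\}$ to $\{\tau\ge 0\}$: objects of $\cT(T^*M)$ are already localized away from $\{\tau\le 0\}$, so the shearing argument goes through as stated. The paper's factor $4$ comes from two honest factors of $2$: first, the argument of \cite{AI20} yields a bound for the distance $d_{\cD(M)}$ used there, and the paper notes $d_{\cT(T^*M)}\le 2\,d_{\cD(M)}$; second, the natural output of the shearing argument is $\int_0^1(\max h_s-\min h_s)\,ds$, which is then bounded by $2\int_0^1\max|h_s|\,ds$. If your Step~4 were literally correct you would in fact get $2c$ rather than $4c$; the discrepancy is another sign that the compatibility step is not as automatic as you suggest.
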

\begin{proof}
    First note that we have
    \begin{align*}
        \MS(K) \subset 
        \big\{
            & (\phi_s(x,t;\xi,\tau),(x,t;-\xi,-\tau), (s,-h_s(\phi_s(x,t;\xi,\tau))))
            \; \big| \\
            & (x,t;\xi,\tau) \in T^*(M \times \bR) \setminus 0_{M\times \bR}, s \in I
        \big\}, 
    \end{align*}
    which implies 
    \[
        \MS(K \circ F) 
        \subset T^*M 
        \times 
        \left\{ (t,s;\tau,\sigma) \; \middle| \;
        \tau \ge 0,
        -\max h_s(x,t;\xi,\tau) \le \sigma \le -\min h_s(x,t;\xi,\tau)
        \right\}. 
    \]
    Since $h$ is homogeneous, we get $h_s(x,t;\xi,\tau) = \tau h_s(x,t;\xi/\tau,1)$ for $\tau>0$.
    Thus, we can apply the same proof in Theorem~4.16 \cite{AI20} to get 
    \[
        d_{\cT(T^*M)}(F,K_1 \circ F) \le 2 \int_0^1 \left( \max h_s(x,t;\xi,1) - \min h_s(x,t;\xi,1) \right) \, ds.
    \]
    Here, note that the distance $d_{\cT(T^*M)}$ is slightly different from the distance $d_{\cD(M)}$ in \cite{AI20} and we have $d_{\cT(T^*M)} \le 2d_{\cD(M)}$.
    The right-hand side of the inequality is bounded above by the desired integral. 
\end{proof}

Let $L \in \mathfrak L(T^*M)$ and $L_\infty \in \widehat\frakL(T^*M)$. We assume that
$\gammasupp(L_\infty)=L$ and we want to prove that $L_\infty\in \frakL(T^*M)$ and $L_\infty = L$.  Let
$\widetilde L = (L, f_L, \widetilde G) \in \cL(T^*M)$ be a lift of $L$ and let $\widetilde L_\infty \in
\widehat \cL(T^*M)$ be a lift of $L_\infty$.  In view of~\cite{AGHIV} our assumption means that the sheaf
$F_{\widetilde L_\infty} = \widehat Q(\widetilde L_\infty) \in \cT(T^*M)$ satisfies $\RS(F_{\widetilde
  L_\infty}) = L$.  To prove that $L_\infty = L$ it is enough to see that $F_{L_\infty}$ is isomorphic to $F_L
= Q(\widetilde L)$, up to translation (in $t$) and shift (in grading).  To this end, we shall characterize the
objects $F$ of $\cT_{\mathrm{lc},L}(T^*M)$ with $\MS(F) = T_c(\Lambda)$ for some $c \in \bR$, where $\Lambda \subset
T^*(M\times\bR)$ is the cone over a Legendrian lift of $L$ and $T_c$ also denotes the translation on $T^*(M \times \bR)$ by $c$ (see \cref{def:minimalSS}, \cref{lem:caractSSminimal0} and \cref{prop:caracterisation-F_L}).  
Explicitly
\[
    \Lambda= \{(x,\tau p, -f_{L}(x,p), \tau) \mid \tau >0, (x,p)\in L\}.
\] 
Hence $\Lambda$ is a conic Lagrangian submanifold of $T^*(M\times\bR)$ contained in $\{\tau > 0\}$.  Note that the
coisotropic submanifold $\rho_t^{-1}(L)$ is foliated by the translates of $\Lambda$: $\rho_t^{-1}(L) =
\bigsqcup_{c\in\bR} T_c(\Lambda)$.  
It is not too difficult to see that any closed conic coisotropic subset of $\rho_t^{-1}(L)$ is a union of
translates of $\Lambda$. 
Hence for any non zero $F\in \cT_L(T^*M)$, $\MS(F)$ contains at least $T_{c}(\Lambda)$ for some $c\in\bR$. However we shall not use these facts.

\section{Cohomologically chordless sheaves}

The main result we want to prove, \cref{thm:main}, is about the space $\widehat{\cL}(T^*M)$ and its statement is independent of sheaves. 
However, our proof starts by embedding this space in the category of sheaves via the functor $Q$. 
This embedding $Q$ is far from being essentially surjective.
We do not try to characterize its image, but we give here a useful property, \emph{cohomologically chordless}, shared by the sheaves in its image. 
This property is a cohomological consequence of the following geometric property: if $F = Q(\widetilde L)$ for some smooth Lagrangian brane $\widetilde L = (L,f_L, \widetilde G)$, then the reduction map $\MS(F) \cap ST^*(M\times\bR) \to T^*M$ is an embedding with image $L$. 
In other words, the Legendrian $\MS(F) \cap ST^*(M\times\bR)$ has no Reeb chords.  
Unfortunately, this geometric property is not necessarily preserved by taking limits since a $\gamma$-support may have double points (see Ex.~6.22 in~\cite{viterbo2022supports}).
However, the geometric property easily implies the following (already used in~\cite[chapter XII.4]{guillermou19}), which is stable by completion: $\RHom(F, T_{c*}F)$ is constant when $c$ runs over $\bR_{>0}$ or over $\bR_{<0}$ (and in the latter case it is zero).
Our \cref{def:minimalSS} below only retains the case $\bR_{<0}$ but gives a slightly stronger version.

As already mentioned, even for a cohomological chordless sheaf $F$, the map $\MS(F) \cap ST^*(M\times\bR) \to \RS(F)$ may not be injective. However we can give a sheafy statement analog to our main theorem: if $F$ is cohomological chordless and $\RS(F)$ is a smooth exact Lagrangian submanifold, then $\MS(F) \cap ST^*(M\times\bR) \to \RS(F)$ is a bijection (see \cref{prop:caracterisation-F_L} below for a more precise statement).

We denote by $q\colon M\times\bR \to M$ the projection. 

\begin{definition}\label{def:minimalSS}
  Let $F\in \cT(T^*M)$. We say that $F$ is \emph{cohomologically chordless} if
\begin{equation*}
  \RHom(F \otimes q^{-1}G, T_{c*}F ) \simeq 0
\end{equation*}
for all $c<0$ and all  locally constant  $G \in \SD(\bfk_{M})$ (we say that an object of $\SD(\bfk_{M})$ is locally constant if its cohomology sheaves are locally constant). 
\end{definition}

Before proving \cref{prop:caracterisation-F_L} we give several results about cohomologically chordless sheaves. 
\begin{lemma}
\label{lem:caractSSminimal0}
  Let $F\in \cT_L(T^*M)$ with $\MS(F) = T_{c_0}(\Lambda)$ for some $c_0\in \bR$ and $F|_{M\times \{t\}} \simeq 0$
  for $t\ll0$. Then $F$ is cohomologically chordless.
\end{lemma}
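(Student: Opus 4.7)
I would reduce the claim to a vanishing of a pushforward on $M$ via adjunction, and then establish this vanishing by a boundary-analysis of the relevant Hom-sheaf.

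\emph{Adjunction reduction.} Applying the tensor--hom adjunction and the $q^{-1}\dashv\rR q_*$ adjunction,
\[
  \RHom(F \otimes q^{-1}G, T_{c*}F) \simeq \RHom_M\bigl(G,\, \rR q_*\, \cRHom(F, T_{c*}F)\bigr).
\]
Since $q\circ T_c = q$ gives $\rR q_*\,T_{c*}\simeq \rR q_*$, and since $\cRHom$ is translation-invariant, $\rR q_*\cRHom(F, T_{c*}F) \simeq \rR q_*\cRHom(T_{d*}F, F)$ for $d := -c > 0$. It therefore suffices to show
\[
  \rR q_*\,\cRHom(T_{d*}F, F) \simeq 0 \quad \text{on $M$, for every $d > 0$.}
\]

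\emph{Support analysis and boundary vanishing.} Let $t_0 := \inf\{t\in\bR : F|_{M\times\{t\}}\not\simeq 0\}$, which is finite by hypothesis. Then $T_{d*}F$ vanishes on $M\times(-\infty, t_0+d)$, hence so does $\cRHom(T_{d*}F, F)$, which is therefore supported in $M\times[t_0+d,\infty)$. The crucial claim is that this Hom-sheaf also vanishes on the lower boundary $M\times\{t_0+d\}$ of its support, so that it has the form $j_!H_0$ for the open inclusion $j: M\times(t_0+d,\infty)\hookrightarrow M\times\bR$ and some $H_0$. Given this, a Fubini-type argument yields $\rR q_*\, j_!H_0 \simeq 0$: the stalk at $x\in M$ is the limit of $\RGamma(V\times\bR, j_!H_0)$ for small neighborhoods $V\ni x$, and each such term vanishes, mirroring the elementary fact $\RGamma(\bR,\, j'_!\bfk_{(a,\infty)})\simeq 0$ for the open inclusion $j':(a,\infty)\hookrightarrow\bR$.

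\emph{The crux and main obstacle.} The boundary-vanishing statement is the technical heart of the proof. In the prototypical graph-Lagrangian case $L=\Gamma_{d(-h)}$ with $F\simeq \bfk_{\{t\ge h(x)\}}$, one computes directly
\[
  \cRHom\bigl(\bfk_{\{t\ge h(x)+d\}},\, \bfk_{\{t\ge h(x)\}}\bigr) \simeq \bfk_{\{t > h(x)+d\}},
\]
which is manifestly of the form $j_!$ from the open set. For general $F$ I would argue microlocally at a boundary point $(x, t_0+d)$: using the standard estimate $\MS(\cRHom)\subset \MS(F)+\MS(T_{d*}F)^a$ together with the assumption that $\MS(F) = T_{c_0}(\Lambda)$ is a single translate of $\Lambda$ (not a union), one should show that the $\tau$-component of $\MS(\cRHom)$ at this point is nonpositive and that there is a nonzero outward covector forcing the $j_!$ structure. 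The main obstacle is handling the possible Reeb chords of the Legendrian lift of $L$ (of length $d$), which contribute to the Minkowski sum $\MS(F)+\MS(T_{d*}F)^a$; the single-translate hypothesis on $\MS(F)$ is what should keep the microlocal picture at the bottom as clean as in the graph case and make the argument go through.
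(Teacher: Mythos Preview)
Your adjunction reduction to $\rR q_*\cRHom(T_{d*}F,F)\simeq 0$ is fine, but the two remaining steps both have genuine gaps.

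The ``Fubini-type argument'' fails as stated: knowing only that $\cRHom(T_{d*}F,F)\simeq j_!H_0$ for the open inclusion $j\colon M\times(t_0+d,\infty)\hookrightarrow M\times\bR$ does \emph{not} give $\rR q_*\,j_!H_0\simeq 0$. The one-variable identity $\RGamma(\bR;\bfk_{(a,\infty)})\simeq 0$ relies on the sheaf being constant out to $+\infty$; it does not ``mirror'' to arbitrary $H_0$. Already for $H_0=\bfk_{M\times(t_0+d,\,b)}$ with $b<\infty$ one finds $\rR q_*j_!H_0\simeq\bfk_M[-1]\not\simeq 0$. So even granting the $j_!$-structure you would still need to control $H_0$ as $t\to+\infty$, and nothing in your outline does this. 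As for the boundary vanishing itself, you correctly flag it as the crux and leave it open; the estimate $\MS(\cRHom)\subset\MS(T_{d*}F)^a+\MS(F)$ does not by itself force the $j_!$-structure at $M\times\{t_0+d\}$, and the single-translate hypothesis does not visibly neutralise the Reeb-chord contributions to that sum for a given $d>0$.

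The paper's proof avoids both issues with one idea you are missing. Because $\MS(F\otimes q^{-1}G)$ and $\MS(T_{c*}F)$ never meet outside the zero section for $c<0$, a non-characteristic deformation argument (the Morse lemma for sheaves) shows that $\RHom(F\otimes q^{-1}G,\,T_{c*}F)$ is \emph{independent of} $c<0$. One then takes $c\ll 0$, so that $T_{c*}F$ is locally constant on $\supp(F\otimes q^{-1}G)$, say $q^!G'[-1]$; the adjunction $(\rR q_!,q^!)$ and the projection formula reduce the problem to $\rR q_!F\simeq 0$, which follows stalkwise from $\MS(F)\subset\{\tau\ge 0\}$ together with $F|_{M\times\{t\}}\simeq 0$ for $t\ll 0$. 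Note in particular the use of $q_!$ rather than $q_*$: this is what makes the fibrewise computation immediate and sidesteps any boundary analysis.
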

\begin{proof} 
  This is already done in~\cite[Lemma~12.4.4]{guillermou19}, but we sketch the proof for the convenience of the
  reader.  First the microsupports of $F \otimes q^{-1}G$ and $T_{c*}F$ do not meet when $c$ runs over
  $\mathopen]-\infty,0[$, hence $\RHom(F \otimes q^{-1}G, T_{c*}F)$ is independent of $c<0$ by a variation on
  the Morse theorem for sheaves~\cite[Corollary~5.4.19]{KS90} (see~\cite{nadler2016noncharacteristic} or~\cite[Corollary~1.2.17]{guillermou19}).  
  We choose $a$ such that $\Lambda \subset T^*(M \times
  \mathopen]-a,a[)$. For $c< -2a$ we obtain that $T_{c*}F$ is locally constant on $\supp(F \otimes q^{-1}G)$,
  say $T_{c*}F \simeq q^{-1}G' \simeq q^!G'[-1]$ there.  Then $\RHom(F \otimes q^{-1}G, T_{c*}F)$ is
  isomorphic to $\RHom(F \otimes q^{-1}G,q^!G'[-1])$. Using the adjunction $(\rR q_!,q^!)$ and the projection
  formula $\rR q_!(F \otimes q^{-1}G) \simeq \rR q_!F \otimes G$, it is then enough to check that $\rR q_!F
  \simeq 0$. This can be proved stalkwise: $(\rR q_!F)_x \simeq \RGamma_c(\{x\}\times\bR;
  F|_{\{x\}\times\bR})$ and the vanishing follows again from the Morse theorem for sheaves since
  $\MS(F|_{\{x\}\times\bR}) \subset \{\tau \geq 0\}$ and $F|_{\{x\}\times\bR}$ vanishes near $-\infty$.
\end{proof}

\begin{lemma}\label{lem:lim_minimalSS}
  Let $(F_i)_{i\in\bN}$, be a convergent sequence in $\cT(T^*M)$ and set $F = \lim_i F_i$ (the limit being for the distance $d_{\cT(T^*M)}$). 
  We assume that $F_i$ is cohomologically chordless for each $i\in \bN$.  
  Then $F$ is cohomologically chordless.
\end{lemma}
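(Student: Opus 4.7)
The plan is to use the interleaving structure to lift the cohomologically chordless vanishing from the approximating sheaves $F_i$ to their limit $F$. Given $c < 0$ and a locally constant $G \in \SD(\bfk_M)$, the goal is to show $\RHom(F \otimes q^{-1}G, T_{c*}F) \simeq 0$; the strategy is that any morphism into $T_{c*}F$ can be sandwiched between interleaving morphisms into a vanishing Hom space for some $F_i$.

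First, fix $\epsilon > 0$ with $\epsilon < -c/2$ and take $i$ so large that $d_{\cT(T^*M)}(F, F_i) < \epsilon$. The interleaving provides morphisms $u \colon F \to T_{\epsilon*}F_i$ and $v \colon F_i \to T_{\epsilon*}F$ satisfying $T_{\epsilon*}v \circ u = \tau_{2\epsilon}(F)$ and $T_{\epsilon*}u \circ v = \tau_{2\epsilon}(F_i)$. For any morphism $\phi \colon F \otimes q^{-1}G \to T_{c*}F$, define
\[
    \phi_i \coloneqq T_{(c+\epsilon)*}u \circ T_{\epsilon*}\phi \circ (v \otimes \id_{q^{-1}G}) \colon F_i \otimes q^{-1}G \to T_{(c+2\epsilon)*}F_i.
\]
Since $c + 2\epsilon < 0$ and $F_i$ is cohomologically chordless, $\phi_i = 0$. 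Composing back in the opposite direction, a direct calculation using the naturality of $\tau$ and the interleaving identities, together with the relation $\tau_a \circ \tau_b = \tau_{a+b}$, yields
\[
    T_{(c+3\epsilon)*}v \circ T_{\epsilon*}\phi_i \circ (u \otimes \id_{q^{-1}G}) = \tau_{4\epsilon}(T_{c*}F) \circ \phi.
\]
The left-hand side vanishes, so $\tau_{4\epsilon}(T_{c*}F) \circ \phi = 0$ for every sufficiently small $\epsilon > 0$.

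The main obstacle is the final step: passing from the vanishing of $\tau_{4\epsilon}(T_{c*}F) \circ \phi$ for all small $\epsilon > 0$ to the vanishing of $\phi$ itself. I would handle this by running the argument uniformly for the derived $\RHom$ complex (in every cohomological degree), showing that for the persistence module $V(c') \coloneqq \RHom(F \otimes q^{-1}G, T_{c'*}F)$, the transition $\tau_\delta \colon V(c') \to V(c'+\delta)$ vanishes whenever $c' < 0$ and $c' + \delta/2 < 0$. Combined with the fact that in the Tamarkin category the structure morphisms $\tau_\delta$ converge to the identity as $\delta \to 0^+$ in the appropriate derived sense, so that a morphism killed by all arbitrarily small $\tau_\delta$ must itself be zero, this forces $V(c) \simeq 0$ for every $c < 0$, which is precisely the cohomologically chordless property of $F$.
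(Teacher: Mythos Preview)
Your sandwich argument is correct: from the interleaving you really do obtain $\tau_{4\epsilon}(T_{c*}F)\circ\phi=0$ for every $0<\epsilon<-c/2$ and every $\phi\in\Hom(F\otimes q^{-1}G,T_{c*}F)$, and the same works in each cohomological degree. The gap is the last step. The assertion that ``in the Tamarkin category the structure morphisms $\tau_\delta$ converge to the identity as $\delta\to 0^+$, so that a morphism killed by all arbitrarily small $\tau_\delta$ must itself be zero'' is not a theorem and is in fact false in general. Objects of $\cT(T^*M)$ can carry \emph{ephemeral} parts---nonzero classes annihilated by every $\tau_\delta$ with $\delta>0$---and nothing you have said rules this out for $\RHom(F\otimes q^{-1}G,T_{c*}F)$. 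Equivalently, you have shown that the persistence module $c\mapsto\RHom(F\otimes q^{-1}G,T_{c*}F)$ has all transition maps zero on $(-\infty,0)$; but a persistence module with all positive-length transition maps zero can still be nonzero pointwise. So you have proved that this $\RHom$ is ephemeral, not that it vanishes, and the cohomologically chordless condition demands actual vanishing.

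The paper sidesteps exactly this obstacle. Rather than bounding individual morphisms via interleaving, it realises $F$ simultaneously as $\hocolim T_{-\varepsilon_i*}F_i$ and $\holim T_{\eta_i*}F_i$, and then uses a homotopy (co)limit exchange lemma to identify $\RHom(F\otimes q^{-1}G,T_{c*}F)$ with a homotopy limit of $\RHom(T_{-\varepsilon_i*}F_i\otimes q^{-1}G,T_{(\eta_i+c)*}F_i)$. These terms are genuinely zero for $i$ large (since $\varepsilon_i+\eta_i+c<0$), so the holim is zero on the nose. The colimit/limit presentation is doing essential work here that the bare interleaving inequality cannot.
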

\begin{proof}
    By \cite[Proposition~6.25]{GV2022singular} (or~\cite[Theorem~4.3]{AI22completeness}), up to taking a subsequence, there
    exist a sequence of positive numbers $(\varepsilon_i)_{i\in \bN}$ converging to $0$ and morphisms 
    \begin{equation}\label{eq:sequence_morphisms}
        f_i \colon T_{-\varepsilon_i *}F_i \to T_{-\varepsilon_{i+1} *}F_{i+1}, 
        \quad 
        u_i \colon T_{-\varepsilon_i*}F_i \to F
    \end{equation}
    such that $u_{i+1} \circ f_i = u_i$, for all $n$, and the morphism $\hocolim
    T_{-\varepsilon_i *}F_i \to F$ induced by the $u_i$'s is an isomorphism, where $\hocolim$ is the sequential homotopy colimit described in \cite{bokstedt1993homotopy} (see also~\cite[Notation~10.5.10]{K-S06}).  The same proposition holds with
    homotopy limits instead of homotopy colimits and we can write in the same way (taking a subsequence again) $F \simto \holim T_{\eta_j *}F_j$ for some other sequence $(\eta_i)_{i\in \bN}$.
    
    Since the tensor product commutes with direct sums, it also commutes with homotopy colimits and we have, for
    any $G\in \SD(\bfk_M)$, $F \otimes q^{-1}G \simeq \hocolim ( T_{-\varepsilon_i *}F_i \otimes
    q^{-1}G)$. Recall that the category of sheaves on a topological space $X$ is a Grothendieck category, so we may apply \cref{lem:limcolim} and infer that $\RHom(F \otimes q^{-1}G, T_{c*}F)$ is a
    homotopy limit of $E_i = \RHom(T_{-\varepsilon_i *}F_i \otimes q^{-1}G, T_{(\eta_i+c)*}F)$.  For a given $c<0$ and for $i$
    big enough we have $\varepsilon_i + \eta_i +c <0$ and then $E_i \simeq 0$. It follows that $\RHom(F \otimes
    q^{-1}G, T_{c*}F)$ vanishes.
\end{proof}

\begin{lemma}\label{lem:limcolim}
  Let $\scC$ be a Grothendieck category.  Let $(A_i, f_i)$, $i\in \mathbb N$, be an inductive system in
  $\SD(\scC)$, with homotopy colimit $A$, and let $(B_j, g_j)$, $j\in \mathbb N$, be a projective system, with
  homotopy limit $B$.  Then $\RHom(A,B)$ is a homotopy limit of the system $(\RHom(A_i, B_i), h_i)$ where
  $h_i$ is the morphism induced by composition with $f_i$, $g_i$.
\end{lemma}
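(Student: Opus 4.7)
The plan is to derive the statement from two one-variable continuity properties of $\RHom$ and then collapse the resulting double homotopy limit along the diagonal of $\bN\times\bN$. Concretely, I aim first to prove
\[
    \RHom(A,B) \simeq \holim_i \RHom(A_i, B) \simeq \holim_i \holim_j \RHom(A_i, B_j),
\]
and then to identify the right-hand side with $\holim_i \RHom(A_i, B_i)$.

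For the first isomorphism I would use the telescope presentation of a sequential homotopy colimit: the object $A = \hocolim_i A_i$ fits in a distinguished triangle
\[
    \bigoplus_{i\in\bN} A_i \xrightarrow{\id - \mathrm{sh}} \bigoplus_{i\in\bN} A_i \to A \xrightarrow{+1},
\]
where $\mathrm{sh}$ is built from the transition morphisms $f_i$. Applying $\RHom(-, B)$ and using that $\RHom$ turns coproducts into products (which is formal from the universal property of $\bigoplus$ together with the existence of products in $\SD(\scC)$ for $\scC$ a Grothendieck category) yields a triangle
\[
    \RHom(A,B) \to \prod_i \RHom(A_i, B) \xrightarrow{\id - \mathrm{sh}^*} \prod_i \RHom(A_i, B),
\]
which is precisely the Milnor triangle computing $\holim_i \RHom(A_i, B)$. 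The second isomorphism is symmetric: $B = \holim_j B_j$ fits in a Milnor triangle $B \to \prod_j B_j \to \prod_j B_j$, and applying $\RHom(A_i, -)$ preserves products, so the resulting triangle identifies $\RHom(A_i, B)$ with $\holim_j \RHom(A_i, B_j)$.

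Combining the two reductions gives $\RHom(A,B) \simeq \holim_{(i,j)\in\bN\times\bN} \RHom(A_i, B_j)$. The diagonal embedding $\Delta\colon \bN\hookrightarrow\bN\times\bN$, $i \mapsto (i,i)$, is cofinal in the poset-theoretic sense (any $(i,j)$ is dominated by $(\max(i,j),\max(i,j))$), so the homotopy limit collapses to $\holim_i \RHom(A_i, B_i)$. A direct inspection of the telescope/Milnor triangles shows that the induced transition morphism $\RHom(A_{i+1}, B_{i+1}) \to \RHom(A_i, B_i)$ is the composite induced by $f_i$ and $g_i$, i.e.\ the $h_i$ of the statement.

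The step I expect to require the most care is the final cofinality/Fubini collapse of the double homotopy limit to the diagonal: in a triangulated category it must be argued via the explicit Milnor triangles rather than as an $\infty$-categorical right Kan extension, but for countable diagonals this is straightforward. All other ingredients are formal consequences of the defining triangles for $\hocolim$ and $\holim$ and of the standard behaviour of $\RHom$ with respect to coproducts and products in the derived category of a Grothendieck abelian category.
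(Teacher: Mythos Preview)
Your proposal follows the same route as the paper: identify $\RHom(A,B)$ with a double homotopy limit of $\RHom(A_i,B_j)$ over $\bN\times\bN$ and then collapse to the diagonal by cofinality. Your telescope/Milnor-triangle derivation of the first step is correct and slightly more hands-on than the paper, which instead lifts both towers to the model category $\Ch(\scC)$ (taking the $B_j$ to be complexes of injectives) and invokes the Fubini theorem for homotopy limits of Chach\'olski--Scherer.

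The one place your outline is too optimistic is the final sentence: the diagonal collapse is a homotopy-cofinality statement, and this is \emph{not} a bare triangulated-category fact one can extract from Milnor triangles alone. Sequential $\holim$'s in a triangulated category are only defined up to non-unique isomorphism, so iterating them and then comparing the iterated object with another $\holim$ requires coherence that the triangulated structure does not supply. This is precisely why the paper passes to $\Ch(\scC)$ (a model category by Hovey) and cites Chach\'olski--Scherer for the initiality of the diagonal $\bN^\op\hookrightarrow(\bN^\op)^2$. Since $\scC$ is Grothendieck that enhancement is available, and once you invoke it your argument is complete; but the step is not ``straightforward'' at the purely triangulated level.
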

\begin{proof}
  According to \cite{Hovey}, Theorem~2.2, the category $\Ch (\scC)$ of chain complexes on $\scC$ is a model
  category having homotopical category $\SD(\scC)$. We denote by $\mathbb V_\bfk$ the category of
  $\bfk$-vector spaces.

  We apply results of \cite{Chacholski-Scherer} where homotopy (co)limits are defined for categories with weak
  equivalences.  If $\scA$ is such a category and $I$ is a small category, we have a functor $\holim'_I\colon
  \mathrm{Fun}(I,\scA) \to \mathrm{Ho}(\scA)$ (in particular $\holim'_I \colon \mathrm{Fun}(I,\Ch(\scC)) \to \mathrm{Ho}(\Ch(\scC)) = \SD(\scC)$.
  In the proof of \cref{lem:lim_minimalSS} the notation $\holim_I F$ applies to $F\in \mathrm{Fun}(I,\SD(\scC))$
  --- this is not a functor: $\holim_I F$ is well-defined up to a non-unique isomorphism.  We use the notation $\holim'_I F$ to avoid confusion (but this is denoted by $\holim_I$ in \cite{Chacholski-Scherer}).  We
  have $\holim'_I F \simeq \holim_I Q\circ F$ where $Q\colon \scA \to \mathrm{Ho}(\scA)$ is the quotient.

  We will apply Section~31.5 from \cite{Chacholski-Scherer} which states that if $F\colon I\times J \to \scC$ is a functor to a model category, then $\holim'_{I\times J}F \simeq \holim_I\holim'_J F
  \simeq\holim_J\holim'_I F$.  In our case $I=J = \mathbb N^\mathrm{op}$.  We first lift the diagram $i\mapsto A_i$ to a
  similar diagram in the set of chain complexes on $\scC$. We shall use the same notation for the lift. We do
  the same for $j \mapsto B_j$ and we may further impose that each $B_j$ is a complex of injectives. We then
  define a functor $F\colon (\mathbb N^\mathrm{op})^2 \to \Ch (\mathbb V_\bfk)$ by $F(i,j)=\Hom (A_i,B_j)$.
  Since the $B_j$'s are injective, we have $\holim'_i F(i,j) \simeq \RHom(\hocolim'_i A_i, B_j) \simeq \RHom(A,
  B_j)$ for each $j$.  From the definition of $\holim$ we also have $\holim_j \RHom(A, B_j) \simeq \RHom(A,
  \holim'_j B_j)$. Hence
  \[
    \RHom(A,B)\simeq\holim'_{(i,j)\in (\mathbb N^\mathrm{op})^2} \Hom(A_i,B_j).
  \]
  According to 31.6 (loc.\ cit.) for $F\colon I \to \scC$ a functor in a model category and $f\colon J \to I$ an initial functor, the map 
  \[
    \holim'_I F \to \holim'_J f^*F
  \]
  is a weak equivalence. 
  Using the fact that the inclusion of the diagonal $\mathbb N^\mathrm{op}$ in $(\mathbb N^\mathrm{op})^2$ is initial we get
  \[
    \RHom(A,B)\simeq \holim'_{i\in \mathbb N^\mathrm{op}} \Hom(A_i,B_i) \simeq \holim_{i\in \mathbb N^\mathrm{op}} \RHom(A_i,B_i).
  \]
  This concludes the proof. 
\end{proof}

We now prove that, if $F\in \cT_{\mathrm{lc},0_M}(T^*M)$ is cohomologically chordless, then $\MS(F) = 0_M\times
(\{c_0\}\times ]0,\infty[)$ for some $c_0\in \bR$ (\cref{prop:caracterisation-F_L-0}
below). 

We first recall a microlocal characterization of the inverse image of sheaves by a projection with
contractible fibers. 
\begin{lemma}\label{lem:inv_im_contract_fib}
  Let $N$ be a manifold and let $I$ be an open interval (or more generally a contractible manifold).  Let
  $p\colon N\times I \to N$ be the projection and let $i_a\colon N\times \{a\} \to N \times I$ be the
  inclusion, for $a\in I$.  Then $p^{-1}\colon \SD(\bfk_N) \to \SD_{T^*N \times 0_I}(\bfk_{N \times I})$ is an
  equivalence of categories, with inverses $\rR p_*$ and $i_a^{-1}$, $a\in I$. Moreover, in the case $N=\bR$,
  these functors induce equivalences $\cT(\pt) \simeq \cT_{0_I}(T^*I)$ and $\cTlc(\pt) \simeq
  \cT_{\mathrm{lc},0_I}(T^*I)$.
\end{lemma}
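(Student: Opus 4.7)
\emph{Plan.} The argument splits naturally into two parts: first, establishing the equivalence on the level of ordinary derived categories of sheaves via a standard microsupport argument; second, descending to the Tamarkin quotient categories.

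The formal checks for the first part are immediate. Pull-back by the submersion $p$ satisfies $\MS(p^{-1}G) \subset T^*N \times 0_I$, so $p^{-1}$ takes values in $\SD_{T^*N \times 0_I}(\bfk_{N \times I})$. Since $p \circ i_a = \id_N$, one has $i_a^{-1}\circ p^{-1} \simeq \id$; and since $I$ is contractible the morphism $G \to \rR p_* p^{-1} G$ is an isomorphism, as can be checked stalkwise using $\RGamma(I;\bfk_I) \simeq \bfk$. This already shows that $p^{-1}$ is fully faithful and that both $i_a^{-1}$ and $\rR p_*$ provide left inverses. The content of the lemma is essential surjectivity: for $F \in \SD_{T^*N \times 0_I}(\bfk_{N\times I})$ the canonical morphism $p^{-1} i_a^{-1} F \to F$ is an isomorphism. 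The hypothesis on $\MS(F)$ means exactly that for every $b \in I$ the inclusion $i_b\colon N \times \{b\} \hookrightarrow N \times I$ is non-characteristic for $F$, since the conormal $0_N \times T^*_b I$ of the slice meets $T^*N \times 0_I$ only on the zero section. Applying the non-characteristic deformation lemma \cite[Proposition~2.7.2]{KS90} (or equivalently the microlocal Morse-type corollary \cite[Corollary~5.4.19]{KS90} to a function sweeping $I$) to the homotopy $h(x,t,s) = (x, (1-s)t + sa)$ retracting $N \times I$ onto $N \times \{a\}$ yields the desired global isomorphism. The same can be verified stalkwise: $F|_{\{x\}\times I}$ has microsupport contained in the zero section of $I$, so it is a locally constant sheaf on the contractible $I$, hence a constant sheaf with value $(i_a^{-1}F)_x$.

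For the descent to the Tamarkin categories in the case $N = \bR$, one needs to check that $p^{-1}$, $\rR p_*$ and $i_a^{-1}$ all preserve $\SD_{\{\tau \le 0\}}$ and hence induce functors between the Tamarkin quotients. For $p^{-1}$ this is the functoriality of $\MS$ under smooth pull-back; for $i_a^{-1}$ it follows from non-characteristic inverse image applied as in the previous paragraph; for $\rR p_*$ it is forced by the established equivalence. The image of the induced $p^{-1}$ on $\cT(\pt)$ lies in $\cT_{0_I}(T^*I)$, since $\MS(p^{-1}G) \cap \{\tau > 0\} \subset T^*\bR\times 0_I$ forces $\RS \subset 0_I$, and the unit/counit isomorphisms descend to the quotient. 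Finally, $p$ and $i_a$ commute with $T_{c*}$ and with the Tamarkin morphisms $\tau_c$, so $p^{-1}$ and $i_a^{-1}$ are contractions for the interleaving distance and preserve constructibility, hence they restrict to the equivalence $\cTlc(\pt) \simeq \cTlc(T^*I) \cap \cT_{0_I}(T^*I)$.

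The main obstacle is precisely the propagation step in the second paragraph: upgrading the pointwise observation that $F|_{\{x\}\times I}$ is constant to a globally canonical isomorphism $F \simto p^{-1} i_a^{-1} F$ requires the full non-characteristic deformation framework of Kashiwara--Schapira; everything else is formal bookkeeping, verifying that the functors involved are compatible with the structures defining the Tamarkin category and its subcategories.
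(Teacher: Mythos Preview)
Your argument is correct and parallels the paper's proof closely: the paper simply cites \cite[Proposition~2.7.8]{KS90} and \cite[Proposition~5.4.5]{KS90} as black boxes for the first equivalence (you effectively unpack their content with the stalkwise propagation argument), and then observes, as you do, that $p^{-1}$ and $i_a^{-1}$ commute with the Tamarkin projector, preserve constructibility, and are $1$-Lipschitz for the interleaving distance. One small imprecision worth fixing: there is no \emph{a priori} canonical morphism $p^{-1} i_a^{-1} F \to F$ coming from adjunction --- the arrow you should actually check is the counit $p^{-1}\rR p_* F \to F$, which your stalkwise argument does show to be an isomorphism, and the identification $\rR p_*F \simeq i_a^{-1}F$ then follows rather than being an input.
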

\begin{proof}
  Proposition~2.7.8 of~\cite{KS90} says that $p^{-1}$ and $\rR p_*$ give equivalences between $\SD(\bfk_N)$
  and $\SD_p(\bfk_{N \times I})$, where the latter category is the subcategory of $\SD(\bfk_{N \times I})$
  whose objects restrict to constant sheaves on the fibers.  Now Proposition~5.4.5 of~\cite{KS90} says that
  $\SD_p(\bfk_{N \times I})$ coincides with $\SD_{T^*N \times 0_I}(\bfk_{N \times I})$.  Since $i_a^{-1} \circ
  p^{-1} \simeq \id_{\SD(\bfk_N)}$, we deduce that $i_a^{-1}$ is also an inverse to $p^{-1}$.

  The functors $p^{-1}$ and $i_a^{-1}$ commute with $- \cstar \bfk_{[0,+\infty[}$ and we deduce $\cT(\pt)
  \simeq \cT_{0_I}(T^*I)$.  Moreover they send constructible sheaves to constructible sheaves and are
  $1$-Lipschitz with respect to the interleaving distance. Hence they also induce the last equivalence of the lemma.   
\end{proof}

\begin{proposition}\label{prop:caracterisation-F_L-0}
  Let $F \in \cT_{\mathrm{lc},0_M}(T^*M)$ such that $F$ is cohomologically chordless.  
  Then there exists $c_0$ and a locally constant sheaf $G_0$ on $M$ such that $F \simeq
  G_0\boxtimes \bfk_{[c_0,+\infty[}$.
\end{proposition}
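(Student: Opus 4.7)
The plan is to reduce the proposition to the case $M=\pt$, where cohomologically chordless objects can be classified directly, and then to use \cref{lem:inv_im_contract_fib} locally on $M$ to assemble the global form. The first step is to upgrade the microsupport condition: the hypothesis $\RS(F)\subset 0_M$ gives only $\MS(F)\cap\{\tau>0\}\subset 0_M\times T^*\bR$, and I claim that cohomological chordlessness forces the stronger $\MS(F)\subset 0_M\times T^*\bR$ including the $\tau=0$ part. A putative point $(x_0,t_0;\xi_0,0)\in\MS(F)$ with $\xi_0\ne 0$ corresponds to a microlocal wall of $F$ in the $M$-direction at height $t_0$; turning such a wall into a non-zero morphism $F\otimes q^{-1}G\to T_{c*}F$ for a suitable locally constant $G$ on $M$ and a small $c<0$ (by sheaf-theoretic truncation along the wall composed with a leftward Tamarkin shift) contradicts the hypothesis.

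Once $\MS(F)\subset 0_M\times T^*\bR$ is established, for each contractible open $V\subset M$ I would apply \cref{lem:inv_im_contract_fib} with $N=\bR$ and $I=V$ (which is contractible as required): the projection $p_V\colon \bR\times V\to \bR$ gives an equivalence $p_V^{-1}\colon\SD(\bfk_\bR)\simto\SD_{T^*\bR\times 0_V}(\bfk_{\bR\times V})$ with inverse $i_{x}^{-1}$ for $x\in V$. Hence $F|_{V\times\bR}\simeq p_V^{-1}H_V$ with $H_V\coloneqq i_x^{-1}F\in\cTlc(\pt)$, and $H_V$ inherits cohomological chordlessness by naturality combined with the contraction property of pullback for the interleaving distance.

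Next I classify cohomologically chordless objects on a point. On $\pt$, cohomological chordlessness reduces to $\RHom(H,T_{c*}H)=0$ for $c<0$. A direct $\RHom$-computation on the constructible generators $\bfk_{[a,b)}$ and $\bfk_{[a,+\infty[}$ of $\cT(\pt)$ shows that finite bars are ruled out (since $\RHom(\bfk_{[a,b)},\bfk_{[a-\varepsilon,b-\varepsilon)})\ne 0$ for small $\varepsilon>0$) and that infinite bars with distinct left endpoints produce non-vanishing cross-terms. So a cohomologically chordless constructible object on $\pt$ has the form $V^\bullet\otimes\bfk_{[c_0,+\infty[}$ for some complex $V^\bullet$ and $c_0\in\bR$; the $\cTlc$ case follows by taking interleaving limits as in \cref{lem:lim_minimalSS}. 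Applied to $H_V$, this yields $H_V\simeq V_V^\bullet\otimes\bfk_{[c_V,+\infty[}$.

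Finally, I patch globally: $c_V$ is an invariant of $F|_{V\times\bR}$ (the infimum of its $t$-support), hence locally constant in $V$ and equal to a single $c_0\in\bR$ by connectedness of $M$. The complexes $V_V^\bullet$ glue, via the transition isomorphisms on overlaps together with the rigidity of $\bfk_{[c_0,+\infty[}$, to a locally constant sheaf $G_0\simeq\rR q_*F$ on $M$, giving $F\simeq G_0\boxtimes\bfk_{[c_0,+\infty[}$. The main obstacle is the first step: upgrading $\RS(F)\subset 0_M$ to $\MS(F)\subset 0_M\times T^*\bR$. Cohomological chordlessness is a global $\RHom$-vanishing, while a stray $\tau=0$ microsupport point is a local phenomenon; bridging them requires a careful microlocal construction of the contradicting morphism, and this is the crux of the proof.
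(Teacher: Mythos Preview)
Your overall architecture is right --- reduce to the point case via \cref{lem:inv_im_contract_fib}, classify there using barcodes, then glue --- and the point-case classification and the gluing are essentially as in the paper. But there are two problems, and one of them is a genuine gap.

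First, what you call ``the crux'' is not needed. The last clause of \cref{lem:inv_im_contract_fib} already gives the equivalence $\cTlc(\pt)\simeq\cT_{\mathrm{lc},0_B}(T^*B)$ at the level of \emph{reduced} microsupports, so from $F\in\cT_{\mathrm{lc},0_M}(T^*M)$ you obtain $F|_{B\times\bR}\simeq p^{-1}F'$ with $F'\in\cTlc(\pt)$ directly, without ever upgrading $\RS$ to $\MS$. (In fact the upgrade is automatic for objects of the Tamarkin category and does not require chordlessness, but you do not even need to know this.) Your sketched argument --- a $\tau=0$ microsupport point yields a non-zero morphism $F\otimes q^{-1}G\to T_{c*}F$ for some locally constant $G$ --- is too vague to be a proof, and fortunately unnecessary.

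Second, and this is the real gap: you claim that $H_V=i_x^{-1}F$ ``inherits cohomological chordlessness by naturality combined with the contraction property of pullback for the interleaving distance.'' This is the actual crux, and your justification does not work. Chordlessness of $F$ is a \emph{global} vanishing of $\RHom_{M\times\bR}(F\otimes q^{-1}G,T_{c*}F)$ for every locally constant $G$ on $M$; what you need locally is the vanishing of $\RHom_\bR(F',T_{c*}F')\simeq\RGamma(B;G'_c)$ where $G'_c\coloneqq\rR q_*\cRHom(F,T_{c*}F)$, and neither pullback functoriality nor Lipschitz properties of restriction get you from the former to the latter. The paper closes this gap by a self-referential trick: one first checks that $G'_c$ is itself locally constant (this uses the local product structure $F|_{B\times\bR}\simeq p^{-1}F'$), and then feeds $G=G'_c$ into the chordlessness hypothesis. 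Adjunction gives
\[
\RHom(F\otimes q^{-1}G'_c,\,T_{c*}F)\;\simeq\;\RHom(G'_c,G'_c),
\]
so $\id_{G'_c}=0$, hence $G'_c\simeq 0$, and then $\RHom(F',T_{c*}F')\simeq\RGamma(B;G'_c)\simeq 0$ follows. You have identified the wrong step as the hard one.
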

\begin{proof}
  (i) For $c\in\bR$ we set $G'_c = \rR q_* \cRHom(F, T_{c*}F)$.  By \cref{lem:inv_im_contract_fib}, for any
  open ball $B\subset M$, we have $F|_{B \times \bR} \simeq p^{-1}F'$ for some $F' \in \cTlc(\pt)$, where
  $p\colon B\times\bR \to \bR$ is the projection. It follows that $\cRHom(F, T_{c*}F)|_{B \times \bR} \simeq p^{-1}\cRHom(F', T_{c*}F')$. By base change we deduce that $G'_c|_{B}$ is constant. Hence
  $G'_c$ is locally constant.  We also have the adjunction isomorphism
    \begin{align*}
      \RHom(F \otimes q^{-1}G'_c, T_{c*}F)
      &\simeq  \RHom( q^{-1}G'_c, \cRHom(F, T_{c*}F)) \\
      &\simeq \RHom(G'_c, \rR q_*  \cRHom(F, T_{c*}F)) = \RHom(G'_c,G'_c).    
    \end{align*}
    Since $F$ is cohomologically chordless, it follows that $G'_c \simeq 0$ for any $c<0$.

    \smallskip
    \noindent (ii) Let $x \in M$ be given and let $B$ be a small ball around $x$.  With the same notations as in~(i) we have $\cRHom(F, T_{c*}F)|_{B \times \bR} \simeq p^{-1}\cRHom(F', T_{c*}F')$ and the base change formula gives $\RGamma(B; G'_c) \simeq \RHom(F', T_{c*}F')$.
    For $c<0$ we thus obtain $\RHom(F',T_{c*}F') \simeq 0$.  Let us check that this implies $F' \simeq
    E\otimes \bfk_{[c_0,+\infty[}$ for some constant sheaf $E$ on $\mathbb R$ and some $c_0\in \bR$.

    By~\cite[Corollary~B.12]{GV2022singular} we have a decomposition $F' \simeq \bigoplus_{j \in \scI} \bfk_{[a_j,b_j[}[d_j]$,
    where $\scI$ is a countable set and $a_j\in \bR$, $b_j \in \bR \cup \{+\infty\}$, $d_j \in \bZ$.  If $F'$
    is not of the form $E\otimes \bfk_{[c_0,+\infty[}$, then there exists $n$ with $b_n\not=+\infty$ or there exist $n,m$
    with $a_n\not=a_m$ (say $a_n<a_m$).  In the first case we write $F' \simeq \bfk_{[a_n,b_n[}[d_n] \oplus
    F''$ and see that $H(c) := \RHom( \bfk_{[a_n,b_n[}, \bfk_{[c+a_n,c+b_n[})$ is a direct summand of
    $\RHom(F', T_{c*}F')$.  By \cref{lem:intervalles} below $H(c) \simeq \bfk[-1]$ for $a_n-b_n < c <0$.
    The second case is similar, with the use of the fact that $\Hom(\bfk_{[a_n,+\infty[},
    \bfk_{[c+a_m,+\infty[}) \simeq \bfk$ for $a_n-a_m < c$.  In both cases we have $\RHom(F', T_{c*}F') \not=
    0$ and get a contradiction. Hence $F' \simeq E\otimes \bfk_{[c_0,+\infty[}$ for some constant sheaf $E$ and $c_0\in
    \bR$, as claimed.

    \smallskip\noindent (iii) Summing up, we have for any $x\in M$ and ball $B$ around $x$ an isomorphism
    $F|_{B \times \bR} \simeq p^{-1}(E\otimes \bfk_{[c_0,+\infty[})$ where $p\colon B\times\bR \to \bR$ is the projection,
    $E \in \SD(\bfk)$ and $c_0\in \bR$.  Since $M$ is connected, $c_0$ does not depend on $x$.  It follows
    that $F$ is supported on $M \times [c_0,+\infty[$, hence $\cRHom(\bfk_{M \times [c_0,+\infty[}, F) \simto F$.
    
    Let us set $G_0 = \rR q_*F$.  The image of $\id_{G_0}$ by the adjunction isomorphisms
    \begin{multline*}
      \Hom(G_0, \rR q_*F) \simeq   \Hom(q^{-1}G_0, F) \\
      \simeq \Hom(q^{-1}G_0, \cRHom(\bfk_{M \times [c_0,+\infty[}, F)) 
      \simeq \Hom(q^{-1}G_0 \otimes \bfk_{M \times [c_0,+\infty[}, F)
    \end{multline*}
    gives a morphism $u\colon q^{-1}G_0 \otimes \bfk_{M \times [c_0,+\infty[} \to F$. By~(ii) it is locally an
    isomorphism, hence it is an isomorphism.
\end{proof}

\begin{lemma}\label{lem:intervalles}
  Let $a,c \in \bR$ and $b,d \in \bR \cup \{+\infty\}$ with $a<b$, $c<d$. We have
  \begin{align*}
  \cRHom(\bfk_{[a,b[}, \bfk_{[c,d[}) &\simeq \cRHom(\bfk_{[a,b[ \, \cap \,]c,d]}, \bfk_{\bR}) \\[1mm]
  &\simeq
  \begin{cases}
    \bfk_{[c,b]} & \text{if } a\leq c < b \leq d, \\
    \bfk_{]a,d[} & \text{if }  c < a < d < b , \\
    \bfk_{\{a\}}[-1] & \text{if } a = d, \\
    \bfk_I & \text{else, where $I$ is half closed or empty}
  \end{cases}
  \end{align*}
  and  in particular
  \[
  \RHom(\bfk_{[a,b[}, \bfk_{[c,d[}) \simeq
  \begin{cases}
  \bfk & \text{if } a\leq c < b \leq d, \\
  \bfk[-1] & \text{if }  c < a \leq d < b , \\
  0 & \text{else.}
  \end{cases}
  \]  
\end{lemma}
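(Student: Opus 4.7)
The strategy is to reduce, via Verdier duality on $\bR$, to computing $\cRHom(\bfk_J, \bfk_\bR)$ for various shapes of interval $J$, and then enumerate the possible configurations of $J = [a, b[\, \cap\, ]c, d]$.

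For the first isomorphism, we use $\omega_\bR \simeq \bfk_\bR[1]$ together with the duality formula $\cRHom(F, G) \simeq \cRHom(F \otimes D(G), \omega_\bR)$ (valid when $DD(G) \simeq G$), reducing the problem to computing $D(\bfk_{[c,d[})$. From the short exact sequence $0 \to \bfk_{]-\infty, s[} \to \bfk_\bR \to \bfk_{[s, \infty[} \to 0$ and the stalkwise identity $\rR j_* \bfk_{]-\infty, s[} \simeq \bfk_{]-\infty, s]}$ for $j \colon \, ]-\infty, s[\, \hookrightarrow \bR$ open, one obtains $\cRHom(\bfk_{[s, \infty[}, \bfk_\bR) \simeq \bfk_{]s, \infty[}$. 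Applying this to $0 \to \bfk_{[d, \infty[} \to \bfk_{[c, \infty[} \to \bfk_{[c, d[} \to 0$ (with the obvious adjustment if $d = +\infty$) yields $D(\bfk_{[c,d[}) \simeq \bfk_{]c,d]}[1]$. Substituting and using $\bfk_{[a,b[} \otimes \bfk_{]c,d]} \simeq \bfk_{[a,b[\, \cap\, ]c,d]}$ gives the first claim.

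For the explicit values, the intersection $J = [a, b[\, \cap\, ]c, d]$ has left endpoint $a$ (closed) if $a > c$ and $c$ (open) if $a \leq c$, and symmetrically for the right endpoint depending on the comparison between $b$ and $d$. The possible non-empty shapes are $]c, b[$, $[a, b[$, $]c, d]$, $[a, d]$, and $\{a\}$ (the last when $a = d$), each arising in a specific region of the $(a, b, c, d)$-parameter space. For each of these, $\cRHom(\bfk_J, \bfk_\bR)$ is computed by the same short-exact-sequence method, yielding respectively $\bfk_{[c, b]}$, $\bfk_{]a, b]}$, $\bfk_{[c, d[}$, $\bfk_{]a, d[}$, and $\bfk_{\{a\}}[-1]$. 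Matching these shapes with the regions cut out by the inequalities among $a, b, c, d$ reproduces the four cases of the lemma, and the statement about $\RHom$ follows by taking $\RGamma(\bR; -)$: closed bounded intervals contribute $\bfk$, bounded open intervals and the shifted skyscraper contribute $\bfk[-1]$, and half-closed intervals or the empty set contribute $0$.

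No step of the argument is technically subtle; the main effort lies in bookkeeping the sub-cases and handling the infinite-endpoint cases ($b = +\infty$ or $d = +\infty$), each of which reduces to the same type of short-exact-sequence computation.
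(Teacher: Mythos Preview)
Your proof is correct and follows essentially the same approach as the paper's: both reduce the first isomorphism to the tensor--hom adjunction $\cRHom(F,\cRHom(G',\bfk_\bR)) \simeq \cRHom(F\otimes G',\bfk_\bR)$ together with the identification $\bfk_{[c,d[} \simeq \cRHom(\bfk_{]c,d]},\bfk_\bR)$, and then enumerate the possible shapes of $[a,b[\,\cap\,]c,d]$. The only cosmetic difference is that the paper packages the interval dualities into the single rule $\cRHom(\bfk_I,\bfk_\bR)\simeq\bfk_{I^*}$ (swap open and closed endpoints), whereas you derive each instance via short exact sequences and phrase the reduction in terms of Verdier duality; these are the same computation.
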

\begin{proof}
  For an interval $I$ with non empty interior let us write $I^* = (\overline{I} \setminus I) \cup \Int(I)$ (in
  words, we turn closed ends into open ones and conversely).  Then $\cRHom(\bfk_I,\bfk_\bR) \simeq \bfk_{I^*}$.
  In particular $\cRHom(\bfk_{[a,b[}, \bfk_{[c,d[}) \simeq \cRHom(\bfk_{[a,b[}, \cRHom(\bfk_{]c,d]}
  ,\bfk_\bR)) \simeq \cRHom(\bfk_{[a,b[} \otimes \bfk_{]c,d]} ,\bfk_\bR)$, which gives the first isomorphism.
  The second one follows by a case by case check, together with the additional isomorphism
  $\cRHom(\bfk_{\{a\}}, \bfk_\bR) \simeq \bfk_{\{a\}}[-1]$.  The last assertion is obtained by taking global
  sections.
\end{proof}

We now check that $\DHam_c(T^*M)$ and its completion preserve cohomologically chordless sheaves.
\begin{lemma}\label{lem:cohom_chord_Ham_invar}
	Let $\varphi \in \DHam_c(T^*M)$ and $\cK_\varphi = Q(\varphi)$.  
	Let $F\in \cT(T^*M)$ be cohomologically chordless. 
	Then $\cK_\varphi^\convstar(F)$ is cohomologically chordless.
\end{lemma}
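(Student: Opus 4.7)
Write $\Psi \coloneqq \cK_\varphi^\convstar$, an auto-equivalence of $\cT(T^*M)$ with inverse $\cK_{\varphi^{-1}}^\convstar$. The plan is to transport the $\RHom$ of interest back to an $\RHom$ involving $F$ itself, where the hypothesis applies. Two elementary ingredients enter. First, $\Psi$ commutes with $T_{c*}$, which is immediate from the definition of $\convstar$ via $\rR m_!$ and the translation-invariance of the sum map (the kernel $\cK_\varphi$ quantizes a Hamiltonian on $T^*M$ that does not see the $\bR_t$-coordinate). Second, the functor $-\otimes q^{-1}G$ is itself realized as a $\convstar$-action: setting $\cK_G \coloneqq i_*G \boxtimes \bfk_{[0,+\infty)} \in \cT(T^*M^2)$, where $i\colon M \hookrightarrow M^2$ is the diagonal, a direct computation using the identification $F \simeq F \cstar \bfk_{[0,+\infty)}$ in $\cT$ gives $\cK_G \convstar F \simeq F \otimes q^{-1}G$.

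Combining these with the auto-equivalence adjunction $\RHom(A, \Psi B) \simeq \RHom(\Psi^{-1}A, B)$ yields
\begin{align*}
  \RHom(\Psi(F) \otimes q^{-1}G, T_{c*}\Psi(F))
  &\simeq \RHom(\cK_G \convstar \Psi(F), \Psi(T_{c*}F)) \\
  &\simeq \RHom(\tilde\cK \convstar F, T_{c*}F),
\end{align*}
where $\tilde\cK \coloneqq \cK_{\varphi^{-1}} \convstar \cK_G \convstar \cK_\varphi \in \cT(T^*M^2)$. The final step is to recognize $\tilde\cK$ as $\cK_{G'}$ for some locally constant $G' \in \SD(\bfk_M)$. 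The microsupport of $\tilde\cK$ is computed by composition of relations as $\varphi^{-1} \circ \id \circ \varphi = \id$, so $\tilde\cK$ has reduced microsupport in the conormal of the diagonal. A Tamarkin-category version of the Kashiwara--Schapira classification of sheaves with microsupport on a smooth Lagrangian then identifies $\tilde\cK$ with $i_*G' \boxtimes \bfk_{[0,+\infty)} = \cK_{G'}$ for some locally constant $G'$ on $M$.

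Once this is in place, $\tilde\cK \convstar F \simeq F \otimes q^{-1}G'$, and the cohomological chordlessness of $F$ applied with the locally constant sheaf $G'$ yields $\RHom(F \otimes q^{-1}G', T_{c*}F) \simeq 0$, as required. The main obstacle is the classification step: while the microsupport of $\tilde\cK$ is elementary to compute, promoting this to the precise tensor-product form $i_*G' \boxtimes \bfk_{[0,+\infty)}$ requires a uniqueness statement for sheaves supported microlocally on the conormal of the diagonal in the Tamarkin setting, and the identification $G'$ itself depends on the monodromy of $G$ transported along the Hamiltonian isotopy from $\id$ to $\varphi$.
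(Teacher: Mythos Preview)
Your reduction to the kernel $\tilde\cK = \cK_{\varphi^{-1}} \convstar \cK_G \convstar \cK_\varphi$ is clean and the formal manipulations are correct, but the step you yourself flag as ``the main obstacle'' is a genuine gap, not merely a missing reference. Knowing only that $\RS(\tilde\cK)$ lies in the conormal of the diagonal is \emph{not} enough to conclude $\tilde\cK \simeq i_*G' \boxtimes \bfk_{[0,+\infty)}$: for example $i_*\bfk_M \boxtimes \bfk_{[0,1)}$ lies in $\cT(T^*M^2)$ and has $\RS$ equal to $N^*\Delta$, yet is not of that form. So the ``Tamarkin-category version of the Kashiwara--Schapira classification'' you invoke does not exist in the generality you need; what is actually required is essentially the content of the lemma itself, and everything preceding it in your argument is formal.

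The paper closes exactly this gap, but by a direct route that avoids classifying $\tilde\cK$ altogether. It proves the identity $\cK_\varphi^\convstar(F \otimes q^{-1}G) \simeq \cK_\varphi^\convstar(F) \otimes q^{-1}G$ (equivalently $\tilde\cK \simeq \cK_G$, so in fact $G' = G$) by using that $\varphi$ is the time-$1$ map of an isotopy: both sides extend to families over the isotopy parameter with microsupport constrained by the Hamiltonian, they agree at time $0$, and the uniqueness result of~\cite{G-K-S} (see~\cite[Corollary~2.1.5]{guillermou19}) forces agreement at time $1$. This is shorter and sidesteps the classification problem entirely. Incidentally, your closing remark that $G'$ depends on monodromy transported along the isotopy is misleading: since the isotopy connects $\varphi$ to $\id$ and $G$ is a sheaf on $M$, no nontrivial transport occurs, and indeed $G' = G$.
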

\begin{proof}
	Since $\cK_\varphi^\convstar$ is an equivalence, we have 
	\[
	\RHom(\cK_\varphi^\convstar(F \otimes q^{-1}G), \cK_\varphi^\convstar(T_{c*}F)) 
	\simeq \RHom(F \otimes q^{-1}G, T_{c*}F).
	\]
	Hence it is enough to check that $\cK_\varphi^\convstar$ commutes with $T_{c*}$, which is clear by the definition of $\convstar$, and that
	\[
	\cK_\varphi^\convstar(F \otimes q^{-1}G) \simeq \cK_\varphi^\convstar(F) \otimes q^{-1}G.
	\]
	Since $\varphi$ is the time $1$ of some isotopy, both sides of this isomorphism are restrictions at time $1$ of sheaves in $\cT_A(T^*(M\times\bR))$, where $A \subset T^*(M\times\bR)$ is given by $A=\{(x,\xi,s,\sigma) \mid \sigma = h(x,\xi,s)\}$, with $h$ the Hamiltonian function of $\varphi$.  
    Both sheaves coincide at time $0$ and the result follows from a uniqueness property in this situation (see for example Corollary~2.1.5 in~\cite{guillermou19}).
\end{proof}

We equip $\DHam_c(T^*M)$ with the sheaf-theoretic spectral metric $\gamma^s$ defined as 
\[
	\gamma^s(\varphi,\varphi') = d_{\cT(T^*M^2)}(\cK_{\varphi},\cK_{\varphi'}).
\] 
Denote by $\widehat{\DHam_c}(T^*M)$ the completion of $\DHam_c(T^*M)$ with respect to $\gamma^s$. 
By the completeness of $\cT(T^*M^2)$ with respect to $d_{\cT(T^*M^2)}$ \cite{AI22completeness,GV2022singular}, we can extend the map $Q$ as 
\[
	\widehat{Q} \colon \widehat{\DHam_c}(T^*M) \to \cTlc(T^*M^2).
\]

As a completion of a group, $\widehat {\DHam_c}(T^*M)$ is a group and the formula $Q(\varphi \circ \psi) \simeq Q(\varphi) \convstar Q(\psi)$ given at the beginning of \S\ref{sec:preliminaries} extends to $\widehat{Q}$.

\begin{lemma}\label{lem:conv=equiv}
	Let $\varphi_\infty \in \widehat{\DHam_c}(T^*M)$ and $\cK_{\varphi_\infty} = \widehat Q(\varphi_\infty)$.  
	Then $\cK_{\varphi_\infty}^\convstar \colon \cT(T^*M) \to \cT(T^*M)$, $F \mapsto \cK_{\varphi_\infty}^\convstar(F) \coloneqq \cK_{\varphi_\infty} \convstar F$ is an equivalence of categories.  
	Moreover, if $F \in \cT(T^*M)$ is cohomologically chordless, so is $\cK_{\varphi_\infty}^\convstar(F)$.
\end{lemma}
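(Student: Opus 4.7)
The plan has two parts corresponding to the two assertions of the lemma. First I would show that $\cK_{\varphi_\infty}^\convstar$ is an equivalence by exhibiting a two-sided quasi-inverse coming from the inverse of $\varphi_\infty$ in the completed group. Second, I would deduce preservation of the cohomologically chordless property by approximating $\varphi_\infty$ by elements of $\DHam_c(T^*M)$ and combining \cref{lem:cohom_chord_Ham_invar} with \cref{lem:lim_minimalSS}.

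For the first step I use that the completion of a group is again a group, so $\varphi_\infty$ admits an inverse $\varphi_\infty^{-1} \in \widehat{\DHam_c}(T^*M)$. As recalled just before the statement, the formula $Q(\varphi \circ \psi) \simeq Q(\varphi) \convstar Q(\psi)$ extends to $\widehat Q$. Applied to $\varphi_\infty$ and $\varphi_\infty^{-1}$ it yields
\[
\cK_{\varphi_\infty} \convstar \cK_{\varphi_\infty^{-1}} \simeq \widehat Q(\id) \simeq \bfk_{\Delta_M \times [0,+\infty[},
\]
and symmetrically in the opposite order. Since $\bfk_{\Delta_M \times [0,+\infty[}$ is the unit for $\convstar$, the endofunctor $\cK_{\varphi_\infty^{-1}}^\convstar$ is a two-sided quasi-inverse to $\cK_{\varphi_\infty}^\convstar$, which is therefore an equivalence.

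For the second step I pick a Cauchy sequence $(\varphi_n)_{n\in\bN}$ in $\DHam_c(T^*M)$ with $\varphi_n \to \varphi_\infty$ for $\gamma^s$, so that $\cK_{\varphi_n} \to \cK_{\varphi_\infty}$ in $d_{\cT(T^*M^2)}$. The key ingredient is that convolution is $1$-Lipschitz in the first variable: interleaving data $u \colon \cK \to T_{a*}\cK'$ and $v \colon \cK' \to T_{b*}\cK$ of total size $a+b$ convolve with $F$ to yield interleaving data of the same size between $\cK^\convstar F$ and $(\cK')^\convstar F$, using that $T_{c*}$ acts only on the target copy of $\bR$ and hence commutes with $\convstar$, and that the structural morphisms $\tau_c$ are natural. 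Consequently $\cK_{\varphi_n}^\convstar(F) \to \cK_{\varphi_\infty}^\convstar(F)$ in $\cT(T^*M)$.

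By \cref{lem:cohom_chord_Ham_invar} each $\cK_{\varphi_n}^\convstar(F)$ is cohomologically chordless when $F$ is, and \cref{lem:lim_minimalSS} then transfers the property to the limit $\cK_{\varphi_\infty}^\convstar(F)$. The only point that requires care is the continuity of $\convstar$ in the first variable for the interleaving distances on $\cT(T^*M^2)$ and $\cT(T^*M)$, but this is a formal consequence of the definitions once the compatibilities of $\convstar$ with $T_{c*}$ and $\tau_c$ are made explicit.
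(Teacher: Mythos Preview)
Your proof is correct and follows essentially the same approach as the paper's: exhibit $\widehat Q(\varphi_\infty^{-1})$ as a $\convstar$-inverse for the equivalence part, then approximate $\varphi_\infty$ by $\varphi_n \in \DHam_c(T^*M)$, use continuity of $\convstar$ to get $\cK_{\varphi_n}^\convstar(F) \to \cK_{\varphi_\infty}^\convstar(F)$, and conclude via \cref{lem:cohom_chord_Ham_invar} and \cref{lem:lim_minimalSS}. Your added justification of the Lipschitz property of $\convstar$ in the first variable is a detail the paper leaves implicit.
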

\begin{proof}
	We find that $\cK_{\varphi_\infty}$ has an inverse with respect to $\convstar$ given by $\cK_{\varphi_\infty}^{-1} = \widehat Q(\varphi_\infty^{-1})$. 
	The first assertion is then clear.  
	Writing $\varphi_\infty$ as a limit of a Cauchy sequence $(\varphi_n)_n$ of $\DHam_c(T^*M)$, the sequence $\cK_{\varphi_n}^\convstar(F)$ converges to $\cK_{\varphi_\infty}^\convstar(F)$.
	Hence the second assertion follows from \cref{lem:cohom_chord_Ham_invar,lem:lim_minimalSS}.
\end{proof}

Now we extend \cref{prop:caracterisation-F_L-0} to the case of a general exact Lagrangian $L$. 
To reduce the problem to \cref{prop:caracterisation-F_L-0} we shall use  a result Arnaud, Humili\`ere, and Viterbo~\cite{AHV2024higher}. 

\begin{theorem}[{cf.~\cite{AHV2024higher}}]\label{thm:nearbyconjC0} 
    Let $L \in \frakL(T^*M)$ be a compact exact Lagrangian submanifold of $T^*M$.
    Then, there exists $\varphi_\infty \in \widehat{\DHam_c}(T^*M)$ such that $\varphi_\infty(L)=0_M$, where both sides should be understood as elements in $\widehat{\frakL}(T^*M)$.
	Moreover, the functor $\cK_{\varphi_\infty}^\convstar \colon \cT(T^*M^2) \to \cT(T^*M^2)$ sends $\cT_{L}(T^*M)$ to $\cT_{0_M}(T^*M)$.
\end{theorem}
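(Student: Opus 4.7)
\emph{Strategy.} The statement asserts that the orbit of $L$ under the completed group $\widehat{\DHam_c}(T^*M)$ reaches the zero section, and that the corresponding action on Tamarkin sheaves carries $\cT_L(T^*M)$ to $\cT_{0_M}(T^*M)$. I would follow the approach of \cite{AHV2024higher}, which uses conformally symplectic flows to contract any compact exact Lagrangian toward $0_M$ in the spectral metric, realizing this contraction as an element of $\widehat{\DHam_c}(T^*M)$.

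\textbf{Step 1 (construction of $\varphi_\infty$).} The plan is to build a Cauchy sequence $(\varphi_n)_n$ in $(\DHam_c(T^*M), \gamma^s)$ with $\gamma(\varphi_n(L), 0_M) \to 0$ in $\frakL(T^*M)$. Inductively, one chooses $\psi_n \in \DHam_c(T^*M)$ that moves $\varphi_{n-1}(L)$ strictly closer to $0_M$ (for instance, by approximating a small piece of the negative Liouville flow, which is the central technical tool in \cite{AHV2024higher}) while ensuring that $\sum_n \gamma^s(\psi_n, \id) < \infty$. Setting $\varphi_n \coloneqq \psi_n \circ \cdots \circ \psi_1$ gives a Cauchy sequence, and we define $\varphi_\infty \coloneqq \lim_n \varphi_n \in \widehat{\DHam_c}(T^*M)$. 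Using the continuity of the action of $\DHam_c(T^*M)$ on $\frakL(T^*M)$ with respect to $(\gamma^s,\gamma)$, the element $\varphi_\infty(L) \in \widehat{\frakL}(T^*M)$ coincides with $\lim_n \varphi_n(L) = 0_M$.

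\textbf{Step 2 (behavior of the functor).} For the second assertion, let $F \in \cT_L(T^*M)$. For each $n$, the auto-equivalence $\cK_{\varphi_n}^\convstar$ satisfies $\RS(\cK_{\varphi_n}^\convstar(F)) = \varphi_n(\RS(F)) \subset \varphi_n(L)$, so $\cK_{\varphi_n}^\convstar(F) \in \cT_{\varphi_n(L)}(T^*M)$. Since $\convstar$ is $1$-Lipschitz in each variable with respect to the interleaving distance, the sequence $\cK_{\varphi_n}^\convstar(F)$ converges to $\cK_{\varphi_\infty}^\convstar(F)$ in $d_{\cT(T^*M)}$. By \cite{AGHIV} the $\gamma$-support and the reduced microsupport coincide on the image of $\widehat Q$; applied to the smooth Lagrangians $\varphi_n(L)$, whose $\gamma$-supports are themselves, this forces $\varphi_n(L) \to 0_M$ in the Hausdorff sense inside any compact set. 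Using the upper semicontinuity of $\RS$ under $d_\cT$-limits — namely, that interleaving shifts dilate the microsupport only in a controlled way — one concludes $\RS(\cK_{\varphi_\infty}^\convstar(F)) \subset 0_M$, i.e.\ $\cK_{\varphi_\infty}^\convstar(F) \in \cT_{0_M}(T^*M)$.

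\textbf{Main obstacle.} The substantive difficulty is Step 1: producing at each iteration a $\psi_n \in \DHam_c(T^*M)$ that simultaneously contracts $\varphi_{n-1}(L)$ toward $0_M$ in $\gamma$ and has small spectral norm $\gamma^s(\psi_n,\id)$, so that the telescoping sum converges. This is exactly the conformally symplectic construction of \cite{AHV2024higher}, and its input cannot be avoided. A secondary technical point is the upper semicontinuity of $\RS$ under interleaving limits applied to \emph{arbitrary} objects $F \in \cT_L(T^*M)$ rather than only to quantizations of Lagrangians; this does not reduce directly to \cite{AGHIV}, but can be established by standard microlocal arguments relating interleaving shifts to the local cone characterization of $\MS(F)$.
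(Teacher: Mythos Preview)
Your outline has the right skeleton but two genuine gaps, one in each step.

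\textbf{Step 1.} The paper does not build $\varphi_\infty$ by an ad hoc inductive choice of small $\psi_n$'s. It uses a Banach fixed-point argument: take the canonical Liouville flow $\psi_0$ (fixing $0_M$) and a second Liouville flow $\psi_1$ fixing $L$ (from \cite{AHV2024higher}), set $h=\psi_0^{-1}\psi_1\in\DHam_c(T^*M)$, and consider
\[
T\colon \DHam_c(T^*M)\to\DHam_c(T^*M),\qquad T(\varphi)=\psi_0^{-1}\varphi\,\psi_1=\psi_0^{-1}\varphi\,\psi_0\,h.
\]
The crucial computation is that $\cK_{\psi_0^{-1}\varphi\psi_0}\simeq f_*\cK_\varphi$ where $f(x_1,x_2,t)=(x_1,x_2,e^{-1}t)$; since rescaling $t$ rescales interleavings, this gives $\gamma^s(T\varphi,T\varphi')=e^{-1}\gamma^s(\varphi,\varphi')$. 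Hence $T$ is a strict contraction, and its fixed point is $\varphi_\infty=\lim_n\varphi_n$ with $\varphi_n=\psi_0^{-n}\psi_1^{\,n}$. Your ``choose $\psi_n$ small enough'' never explains why the $\gamma^s$-norms are summable; the contraction property is exactly what supplies that.

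\textbf{Step 2.} Here you make an actual incorrect inference. You write that \cite{AGHIV} ``forces $\varphi_n(L)\to 0_M$ in the Hausdorff sense''. It does not: $\gamma$-convergence of smooth Lagrangians does \emph{not} imply Hausdorff convergence (e.g.\ graphs of $df_n$ with $\|f_n\|_{C^0}\to 0$ but $\|df_n\|_{C^0}\not\to 0$), and the result of \cite{AGHIV} concerns the reduced microsupport of the \emph{limit} sheaf, not convergence of the sequence of sets. The upper-semicontinuity you need, $\RS(\cK_{\varphi_\infty}^\convstar F)\subset\liminf_n\varphi_n(L)$, is indeed available (it is \cite[Proposition~6.26]{GV2022singular}); the real work is computing the right-hand side. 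With the explicit sequence from Step~1 this is immediate:
\[
\varphi_n(L)=\psi_0^{-n}\psi_1^{\,n}(L)=\psi_0^{-n}(L),
\]
since $\psi_1$ fixes $L$, and $\psi_0^{-n}$ is fiberwise multiplication by $e^{-n}$, so $\liminf_n\varphi_n(L)=0_M$. Without this explicit formula your Step~2 cannot close, because a generic Cauchy sequence realizing $\varphi_\infty$ gives no control on $\liminf_n\varphi_n(L)$.

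In short: the fixed-point/contraction structure is not an optional packaging of your inductive scheme --- it is what produces the specific $\varphi_n=\psi_0^{-n}\psi_1^{\,n}$ whose images of $L$ visibly Hausdorff-shrink to $0_M$, and that geometric shrinking (not $\gamma$-convergence) is what drives the second assertion.
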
 

The theorem is proved in \cite{AHV2024higher} for the completion of $\DHam_c(T^*M)$ with respect to the usual spectral metric $\gamma$.
In \Cref{sec:weak_nearby_sheafgamma}, we give a proof for the sheaf-theoretic spectral metric~$\gamma^s$.
Note that if $\bfk$ is of characteristic $2$ or $M$ is spin, then $\gamma^s$ coincides with the usual spectral metric $\gamma$ (see \cite{GV2022singular}).

\begin{proposition}\label{prop:caracterisation-F_L}
  Let $\widetilde L \in \cL(T^*M)$ be a lift of $L \in \frakL(T^*M)$ and set $F_{\widetilde L} = Q(\widetilde L)$.  
  Let $\varphi_\infty \in \widehat {\DHam_c}(T^*M)$ be given by \cref{thm:nearbyconjC0} and set $\cK_{\varphi_\infty} = \widehat Q(\varphi_\infty) \in \cTlc(T^*M^2)$.
  \begin{enumerate}
      \item There exist a locally constant sheaf $G_0$ on $M$, of rank $1$, and $c_0\in \bR$ such that $\cK_{\varphi_\infty}^\convstar(F_{\widetilde L}) \simeq G_0\boxtimes \bfk_{[c_0,+\infty[}$. 
      \item\label{cor:FLii} Let $F \in \cT_{\mathrm{lc},L}(T^*M)$ such that $F$ is cohomologically chordless.  
      Then there exists $c_1$ and a locally constant sheaf $G_1$ on $M$ such that $F \simeq T_{c_1*}(F_{\widetilde L} \otimes q^{-1}G_1)$.
  \end{enumerate}
\end{proposition}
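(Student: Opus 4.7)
The plan is to use the autoequivalence $\cK_{\varphi_\infty}^\convstar$ furnished by \cref{thm:nearbyconjC0} to transport the problem to the zero section, where \cref{prop:caracterisation-F_L-0} applies directly. For part (i), I first observe that $F_{\widetilde L}=Q(\widetilde L)$ is cohomologically chordless: its microsupport is exactly $\Lambda=T_0(\Lambda)$, and $F_{\widetilde L}|_{M\times\{t\}}\simeq 0$ for $t\ll 0$ (because $\Lambda$ is bounded in the $t$-direction), so \cref{lem:caractSSminimal0} applies. By \cref{thm:nearbyconjC0} the image $\cK_{\varphi_\infty}^\convstar(F_{\widetilde L})$ lies in $\cT_{0_M}(T^*M)$. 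Writing $\varphi_\infty$ as a $\gamma^s$-limit of $(\varphi_n)\subset \DHam_c(T^*M)$, the constructible sheaves $\cK_{\varphi_n}^\convstar(F_{\widetilde L})=Q(\varphi_n(\widetilde L))$ converge to $\cK_{\varphi_\infty}^\convstar(F_{\widetilde L})$, so the limit lies in $\cT_{\mathrm{lc},0_M}(T^*M)$. By \cref{lem:conv=equiv} it is cohomologically chordless, and \cref{prop:caracterisation-F_L-0} then yields $\cK_{\varphi_\infty}^\convstar(F_{\widetilde L})\simeq G_0\boxtimes\bfk_{[c_0,+\infty[}$ for some locally constant $G_0$ on $M$ and some $c_0\in\bR$.

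The rank-$1$ assertion for $G_0$ is the main obstacle. The strategy I would pursue is to exploit the fact that each approximant $Q(\varphi_n(\widetilde L))$ is a simple sheaf along its Legendrian microsupport, with rank-$1$ microstalks; this simpleness is a local property that should survive the passage to the limit in the interleaving distance. Concretely, near any point of $0_M\times\{c_0\}$ the limit must still look like the constant sheaf of rank $1$ on a half-space, which forces $G_0$ to have rank $1$. A cohomological backup is to use that $\cK_{\varphi_\infty}^\convstar$ is a $\convstar$-equivalence and therefore preserves internal Hom, so that $\homst(G_0\boxtimes\bfk_{[c_0,+\infty[},G_0\boxtimes\bfk_{[c_0,+\infty[})\simeq \cRHom(G_0,G_0)\boxtimes\bfk_{[0,+\infty[}$ matches the corresponding computation of $\homst(F_{\widetilde L},F_{\widetilde L})$ for a smooth brane (which gives a rank-$1$ local system tensored with $\bfk_{M\times[0,+\infty[}$).

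For part (ii), I apply $\cK_{\varphi_\infty}^\convstar$ directly to $F$. By \cref{thm:nearbyconjC0}, the same $\cTlc$-approximation argument used in (i), and \cref{lem:conv=equiv}, the image $\cK_{\varphi_\infty}^\convstar(F)$ lies in $\cT_{\mathrm{lc},0_M}(T^*M)$ and is cohomologically chordless, so \cref{prop:caracterisation-F_L-0} provides $\cK_{\varphi_\infty}^\convstar(F)\simeq G_1'\boxtimes\bfk_{[c_1',+\infty[}$. Setting $G_1:=G_1'\otimes G_0^{-1}$ (which makes sense precisely because $G_0$ has rank $1$) and $c_1:=c_1'-c_0$, this rewrites as $T_{c_1*}\bigl((G_0\otimes G_1)\boxtimes\bfk_{[c_0,+\infty[}\bigr)$. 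The functor $\cK_{\varphi_\infty}^\convstar$ commutes with $T_{c_1*}$ (immediate from the definition of $\convstar$) and with $-\otimes q^{-1}G_1$ (extending \cref{lem:cohom_chord_Ham_invar} to the completion by the usual approximation argument, using continuity of $-\otimes q^{-1}G_1$ for $d_{\cT(T^*M)}$). Hence the right-hand side equals $\cK_{\varphi_\infty}^\convstar\bigl(T_{c_1*}(F_{\widetilde L}\otimes q^{-1}G_1)\bigr)$, and since $\cK_{\varphi_\infty}^\convstar$ is an equivalence by \cref{lem:conv=equiv}, I conclude $F\simeq T_{c_1*}(F_{\widetilde L}\otimes q^{-1}G_1)$.
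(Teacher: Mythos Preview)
Your overall strategy matches the paper's: push everything to the zero section via $\cK_{\varphi_\infty}^\convstar$, apply \cref{prop:caracterisation-F_L-0}, then use that $G_0$ has rank $1$ to invert it. The reduction steps and part~(ii) are essentially the paper's argument.

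The genuine gap is the rank-$1$ claim for $G_0$. Neither of your two proposals is a proof. The microstalk argument is heuristic: convergence in the interleaving distance controls global $\RHom$'s, not microlocal ranks, and there is no statement available that ``simpleness survives interleaving limits''. The $\homst$ argument conflates two structures: $\cK_{\varphi_\infty}^\convstar$ is an equivalence obtained by $\convstar$-composition with an invertible kernel, but $\homst$ is the internal Hom adjoint to the monoidal product $\cstar$ on $\cT(T^*M)$, and $\cK_{\varphi}^\convstar$ is not $\cstar$-monoidal (on reduced microsupports it acts by $\varphi$, which does not commute with fiberwise addition). So ``$\convstar$-equivalence, therefore preserves internal Hom'' is a non sequitur. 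Even if some form of $\homst$-invariance held, you would still owe an independent computation of $\homst(F_{\widetilde L},F_{\widetilde L})$ as a sheaf on $M\times\bR$.

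The paper's fix is short and uses precisely the tool you already set up for part~(ii): the commutation $\cK_{\varphi_\infty}^\convstar(-\otimes q^{-1}G)\simeq \cK_{\varphi_\infty}^\convstar(-)\otimes q^{-1}G$. Set $F_1:=(\cK_{\varphi_\infty}^{-1})^\convstar(\bfk_{M\times[c_0,+\infty[})$; the commutation gives $\cK_{\varphi_\infty}^\convstar(F_1\otimes q^{-1}G_0)\simeq G_0\boxtimes\bfk_{[c_0,+\infty[}\simeq \cK_{\varphi_\infty}^\convstar(F_{\widetilde L})$, hence $F_1\otimes q^{-1}G_0\simeq F_{\widetilde L}$. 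Restricting to $M\times\{t\}$ for $t\gg 0$, where $F_{\widetilde L}|_{M\times\{t\}}\simeq\bfk_M$, yields $(F_1|_{M\times\{t\}})\otimes G_0\simeq\bfk_M$, which forces $G_0$ to have rank $1$.
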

\begin{proof}
  Let $F$ be as in~(ii).  
  By \cref{thm:nearbyconjC0} $\cK_{\varphi_\infty}^\convstar(F) \in \cT_{\mathrm{lc},0_M}(T^*M)$, and by \cref{lem:conv=equiv} it is cohomologically chordless.  
  By \cref{prop:caracterisation-F_L-0}, we deduce $\cK_{\varphi_\infty}^\convstar(F) \simeq G_F\boxtimes \bfk_{[c_F,+\infty[}$ for some $c_F\in \bR$ and some locally constant sheaf $G_F$ on $M$.
  
  By \cref{lem:caractSSminimal0} the sheaf $F_{\widetilde L}$ satisfies the property in~(ii).  
  In particular, we have~(i) with $c_0 = c_{F_{\widetilde L}}$ and $G_0 = G_{F_{\widetilde L}}$, but we have to check that $G_0$ is of rank $1$.  
  We set $F_1 = (\cK_{\varphi_\infty}^{-1})^\convstar(\bfk_{[c_0,+\infty[})$.  Then
  $\cK_{\varphi_\infty}^\convstar(F_1 \otimes q^{-1}G_0) \simeq \cK_{\varphi_\infty}^\convstar(F_{\widetilde L})$. Hence $F_1 \otimes q^{-1}G_0
  \simeq F_{\widetilde L}$.  Restricting to $M\times \{t\}$ for $t\gg0$ we obtain $(F_1|_{M\times \{t\}})
  \otimes G_0 \simeq F_{\widetilde L}|_{M\times \{t\}}\simeq \bfk_M$, which implies that $G_0$ is of rank
  $1$.  This proves~(i).

  Now we come back to $F$ as in~(ii). We have
  \[
    \cK_{\varphi_\infty}^\convstar(F) \simeq  G_F\boxtimes \bfk_{[c_F,+\infty[}
    \simeq T_{c_1*}(G_1 \otimes ( G_0\boxtimes \bfk_{[c_0,+\infty[})) ,
  \]
  where $c_1 = c_F-c_0$ and $G_1 = G_F \otimes G_0^{-1}$. 
  Hence $\cK_{\varphi_\infty}^\convstar(F) \simeq \cK_{\varphi_\infty}^\convstar(T_{c_1*}(F_{\widetilde L} \otimes q^{-1}G_1))$ and~(ii) follows.
\end{proof}

\section{Regular Lagrangians are smooth Lagrangians}

We now prove our first main result, as a corollary of the characterization of cohomologically chordless sheaves obtained in \cref{prop:caracterisation-F_L}.

\begin{theorem}\label{thm:main}
  Let $L_\infty \in \widehat\frakL(T^*M)$. We assume that $L = \gammasupp(L_\infty)$ is a compact exact Lagrangian
  submanifold of $T^*M$.  Then $L_\infty = L$ in $\frakL(T^*M)$.
\end{theorem}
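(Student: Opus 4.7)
The plan is to translate the statement into the Tamarkin category via $\widehat Q$, verify that the resulting sheaf is cohomologically chordless, apply the structure theorem \cref{prop:caracterisation-F_L}\,(ii), and then pull the sheaf isomorphism back to an equality in $\widehat\frakL(T^*M)$.

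First I would fix a lift $\widetilde L = (L,f_L,\widetilde G) \in \cL(T^*M)$ of $L$, lift $L_\infty$ to $\widetilde L_\infty \in \widehat\cL(T^*M)$, and set $F = \widehat Q(\widetilde L_\infty)$ and $F_{\widetilde L} = Q(\widetilde L)$. By the approximation proposition in \cref{sec:preliminaries}, $F$ lies in $\cTlc(T^*M)$, and by the main result of \cite{AGHIV} we have $\RS(F) = \gammasupp(L_\infty) = L$. Hence $F \in \cT_{\mathrm{lc},L}(T^*M)$.

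Next I would verify that $F$ is cohomologically chordless. Writing $\widetilde L_\infty$ as a $d_{\cT(T^*M)}$-limit of $Q(\widetilde L_n)$ for some Cauchy sequence $(\widetilde L_n)_n \subset \cL(T^*M)$, the microsupport of each $Q(\widetilde L_n)$ is a single translate of the conic Legendrian lift of $L_n$, and $Q(\widetilde L_n)|_{M \times \{t\}} \simeq 0$ for $t \ll 0$. The obvious analogue of \cref{lem:caractSSminimal0} with $L_n$ in place of $L$ therefore makes each $Q(\widetilde L_n)$ cohomologically chordless, and \cref{lem:lim_minimalSS} promotes this property to $F$.

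At this point \cref{prop:caracterisation-F_L}\,(ii) applies and yields $F \simeq T_{c_1*}(F_{\widetilde L} \otimes q^{-1}G_1)$ for some $c_1 \in \bR$ and some locally constant sheaf $G_1$ on $M$. To finish I would pin down $G_1$ by restricting to a slice $M \times \{t\}$ with $t$ sufficiently large. Choosing the approximating sequence with uniformly bounded supports (this is automatic for a $\gamma$-Cauchy sequence once one passes to a subsequence), each $Q(\widetilde L_n)|_{M \times \{t\}}$ is canonically $\bfk_M$ once $t$ exceeds a common threshold, and the interleaving estimates propagate this to the limit: $F|_{M \times \{t\}} \simeq \bfk_M$. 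On the right-hand side the same restriction gives $F_{\widetilde L}|_{M \times \{t - c_1\}} \otimes G_1 \simeq G_1$ for $t$ large. Comparing the two forces $G_1 \simeq \bfk_M$, hence $F \simeq T_{c_1*}F_{\widetilde L} \simeq Q(T_{c_1}\widetilde L)$. Since $\widehat Q$ is an isometric embedding and the right-hand side lies in the image of $Q$ applied to a genuine smooth brane, this upgrades to $\widetilde L_\infty = T_{c_1}\widetilde L$ in $\widehat\cL(T^*M)$, and therefore $L_\infty = L$ in $\widehat\frakL(T^*M)$.

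The step I expect to be the most delicate is the combination of the limit argument for cohomological chordlessness with the triviality of $G_1$: this is precisely where the hypothesis that $\gammasupp(L_\infty)$ is smooth is leveraged, through \cref{prop:caracterisation-F_L}, to rule out any phantom local-system twist or rank defect in $\widetilde L_\infty$. Everything else is bookkeeping around the structure theorem and the isometry property of $\widehat Q$.
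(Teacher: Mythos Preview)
Your overall strategy matches the paper's exactly: lift to $\widehat\cL$, apply $\widehat Q$, use \cref{lem:caractSSminimal0} and \cref{lem:lim_minimalSS} to get cohomological chordlessness, then invoke \cref{prop:caracterisation-F_L}\,(ii) to obtain $F \simeq T_{c_1*}(F_{\widetilde L}\otimes q^{-1}G_1)$, and finally identify $G_1$. The only substantive divergence is in this last identification, and that is where your argument has a genuine gap.

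Your claim that a $\gamma$-Cauchy sequence can be taken (after passing to a subsequence) with ``uniformly bounded supports'' in the $\bR$-direction is not justified and is in fact false in general: the interleaving distance is a bottleneck-type invariant that ignores short bars, so $Q(\widetilde L_n)|_{\{x\}\times\bR}$ may carry finite bars at arbitrarily large $t$-values while $d_{\cT}(Q(\widetilde L_n),Q(\widetilde L_m))$ stays small. Equivalently, nothing in $\cdis$-convergence bounds $\min f_{L_n}$ or $\max f_{L_n}$. Even granting such a bound, the step ``interleaving estimates propagate this to the limit'' is problematic: restriction to a fixed slice $M\times\{t\}$ is \emph{not} continuous for $d_{\cT(T^*M)}$, so knowing $Q(\widetilde L_n)|_{M\times\{t\}}\simeq\bfk_M$ for all $n$ does not by itself force $F|_{M\times\{t\}}\simeq\bfk_M$; one would additionally need the transition maps in the $\hocolim$ description to restrict to isomorphisms, which you have not argued.

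The paper sidesteps both issues by using functors that \emph{are} stable under interleaving limits. Instead of restricting to $M\times\{t\}$, it restricts to the fiber $\{x\}\times\bR$ (pull-back to a submanifold, hence $1$-Lipschitz) and then takes $\RGamma(\{x\}\times\bR;-)$, which for objects of $\cTlc(\pt)$ records exactly the infinite bars and is therefore invariant along a $d_{\cT}$-convergent sequence. This gives $(G_1)_x\simeq\bfk$; a parallel computation of $\RGamma(M\times\bR;F)$ gives $\RGamma(M;G_1)\simeq\bfk$, and together these force $G_1\simeq\bfk_M$. Replacing your slice-restriction paragraph with this argument closes the gap.
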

\begin{proof}
  We lift $L_\infty$ to $\widetilde L_\infty \in \widehat{\cL}(T^*M)$ and set $F_{\infty} = \widehat Q
  (\widetilde L_\infty)$.  
  By definition, $\widetilde L_\infty$ is the equivalence class of a Cauchy sequence $(L_n)_{n}$ in $\cL(T^*M)$ and, by~\cite{GV2022singular} or~\cite{AI22completeness}, the sequence of associated
  sheaves $F_{L_n}$ converges in $\cT(T^*M)$ to $F_{\infty}$.  By~\cite{AGHIV} we know that $\RS(F_{\infty}) =
  L$.  We lift $L$ into $\widetilde L \in\cL(T^*M)$ and let $F_{\widetilde L}$ be the associated sheaf.  By
  \cref{lem:lim_minimalSS} and \cref{prop:caracterisation-F_L} we have $F_{\infty} \simeq
  T_{c_1*}(F_{\widetilde L} \otimes q^{-1}G_1)$ for some $c_1\in\bR$ and some locally constant sheaf $G_1$ on $M$.  
  Let $x\in M$ be given.  The sequence $F_{L_n}|_{\{x\}\times\bR}$ converges to
  $F_{\infty}|_{\{x\}\times\bR}$ and $\RGamma(\{x\}\times\bR; F_{L_n}|_{\{x\}\times\bR}) \simeq \bfk$ for all $n$.  
  Hence $\RGamma(\{x\}\times\bR; F_{\infty}|_{\{x\}\times\bR}) \simeq \bfk$. It follows that $(G_1)_x \simeq \bfk$.  The same kind of argument shows that $\RGamma(M; G_1) \simeq \RGamma(M\times\bR; F_{\infty})
  \simeq \bfk$. Hence $G_1 \simeq \bfk_M$ and $F_{\infty} \simeq T_{c_1*}F_{\widetilde L}$, which implies
  $\widetilde L_\infty = T_{c_1}(\widetilde L)$, hence $L_\infty = L$.
\end{proof}

\begin{remark}
    Note that for $L=0_M$ in $T^*M$ and for a manifold $M$ satisfying a certain condition (denoted by $(\star)$ in \cite{Viterbo-inverse-reduction}), this theorem follows from Theorem~8.6 of \cite{viterbo2022supports}(version~2) as a consequence of Theorem~6.3 in \cite{Viterbo-inverse-reduction}. 
    This was removed from the published version of \cite{viterbo2022supports} and included in \cite{Viterbo-inverse-reduction}. 
\end{remark}

\section{Compact \texorpdfstring{$\gamma$}{gamma}-supports are connected}
Our second main result answers a question in \cite{AHV2024higher}. 
\begin{theorem}\label{thm:gammasupp_connected}
	Let $L_\infty \in \widehat{\frakL}(T^*M)$ and assume that $\gammasupp(L_\infty)$ is compact. 
	Then $\gammasupp(L_\infty)$ is connected.  
\end{theorem}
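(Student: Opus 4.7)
We argue by contradiction. Suppose $\gammasupp(L_\infty)=A\sqcup B$ with $A,B$ nonempty, disjoint and compact subsets of $T^*M$. Lift $L_\infty$ to $\widetilde L_\infty\in\widehat\cL(T^*M)$ and set $F_\infty=\widehat Q(\widetilde L_\infty)\in\cTlc(T^*M)$. By~\cite{AGHIV}, $\RS(F_\infty)=A\sqcup B$. Writing $\widetilde L_\infty$ as the $\cdis$-limit of a Cauchy sequence $(\widetilde L_n)_n\subset\cL(T^*M)$, each $F_{L_n}=Q(\widetilde L_n)$ is cohomologically chordless by \cref{lem:caractSSminimal0}; hence $F_\infty$ is cohomologically chordless by \cref{lem:lim_minimalSS}.

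The heart of the proof will be to produce a direct sum decomposition $F_\infty\simeq F_A\oplus F_B$ in $\cT(T^*M)$ with $\RS(F_A)=A$ and $\RS(F_B)=B$, both summands nonzero. Pick disjoint relatively compact open neighborhoods $U_A\supset A$ and $U_B\supset B$. Because $\RS(F_\infty)$ is compact, $\rho_t(x,t;\xi,\tau)=(x;\xi/\tau)$ is bounded on $\MS(F_\infty)\cap\{\tau>0\}$, which forces $\xi\to 0$ whenever $\tau\to 0$ along this set; consequently the closures in $T^*(M\times\bR)$ of the pieces of $\MS(F_\infty)\cap\{\tau>0\}$ lying above $U_A$ and above $U_B$ do not merge near the zero section. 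We then enclose them in disjoint closed conic subsets $\Lambda_A,\Lambda_B\subset T^*(M\times\bR)$, and the Kashiwara--Schapira microsupport decomposition yields a distinguished triangle
\[
F_A\longrightarrow F_\infty\longrightarrow F_B\xrightarrow{\;+1\;}
\]
in $\cT(T^*M)$ with $\MS(F_?)\subset\Lambda_?$. To split the triangle we verify that $\RHom_{\cT(T^*M)}(F_B,F_A[k])\simeq 0$ for all $k$: because $A$ and $B$ are compact and disjoint, a Hamiltonian diffeomorphism $\psi\in\DHam_c(T^*M\setminus B)$ (constructed via a fiberwise push with cutoff) displaces $A$ from itself while fixing $F_B$, and combining the displacement estimate of \cref{lem:Ham-approx} with the cohomologically chordless property yields the required vanishing. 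Thus $F_\infty\simeq F_A\oplus F_B$, and both summands are nonzero since their reduced microsupports are.

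Finally, we derive the contradiction by computing $\End^0_{\cT(T^*M)}(F_\infty)$ in two ways. Applying \cref{lem:limcolim} to the presentations $F_\infty\simeq\hocolim T_{-\varepsilon_n*}F_{L_n}\simeq\holim T_{\eta_n*}F_{L_n}$ provided by~\cite{GV2022singular}, we obtain
\[
\RHom_{\cT(T^*M)}(F_\infty,F_\infty)\simeq\holim_n\RHom_{\cT(T^*M)}\bigl(F_{L_n},T_{(\varepsilon_n+\eta_n)*}F_{L_n}\bigr).
\]
Each $L_n\in\frakL(T^*M)$ is a connected smooth compact exact Lagrangian in $T^*M$ (the nearby Lagrangian results ensure connectedness when $M$ is connected), so each term of the inverse system has zeroth cohomology isomorphic to $\bfk$ with transition maps preserving the identity; hence $\End^0_{\cT(T^*M)}(F_\infty)\simeq\bfk$. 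On the other hand, the decomposition $F_\infty\simeq F_A\oplus F_B$ with both summands nonzero provides two linearly independent orthogonal idempotents in $\End^0_{\cT(T^*M)}(F_\infty)$, so its dimension is at least $2$. This is the desired contradiction. The hardest step will be the microlocal decomposition in the Tamarkin category: controlling $\MS(F_\infty)$ near $\{\tau=0\}\subset T^*(M\times\bR)$ rests on the compactness of $\RS(F_\infty)$, and splitting the resulting triangle requires combining the displaceability of $A$ from $B$ with the cohomologically chordless property to produce the necessary $\Hom$-vanishings.
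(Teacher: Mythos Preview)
Your overall strategy matches the paper's: split $F_\infty$ as a direct sum according to the two components of $\RS(F_\infty)$ and then use $\End_{\cT(T^*M)}(F_\infty)\simeq\bfk$ to reach a contradiction. However, two of your three steps have genuine gaps, and the cohomologically chordless machinery is a red herring here.

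\medskip

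\textbf{The triangle.} There is no off-the-shelf ``Kashiwara--Schapira microsupport decomposition'' that, from a splitting $\MS(F_\infty)\subset\Lambda_A\sqcup\Lambda_B$ into disjoint closed cones, produces a distinguished triangle $F_A\to F_\infty\to F_B\xrightarrow{+1}$ with $\MS(F_?)\subset\Lambda_?$. This is exactly the content of a microlocal cut-off in the Tamarkin category, and it is nontrivial. The paper obtains the triangle via Chiu's projectors $P(U),Q(U)$ applied to an open neighborhood $U$ of $A$ with $\overline U\cap B=\varnothing$, together with Kuo's wrapping description of $Q(U)$ to check that, under the hypothesis $\RS(F_\infty)\cap\partial U=\varnothing$, no extra microsupport is created on $\partial U$ (\cref{lem:projector_cutoff}). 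Your remarks about controlling $\MS(F_\infty)$ near $\{\tau=0\}$ do not substitute for this construction.

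\textbf{The splitting.} Your argument for $\RHom(F_B,F_A[k])\simeq 0$ does not work as written. The cohomologically chordless property concerns $\RHom(F_\infty\otimes q^{-1}G,\,T_{c*}F_\infty)$ for $c<0$; it says nothing about morphisms between the summands $F_B$ and $F_A$, and a Hamiltonian displacing $A$ from \emph{itself} has no direct bearing on $\RHom(F_B,F_A)$. The paper instead invokes Tamarkin's separation theorem: since $A$ and $B$ are disjoint and compact in $T^*M$, one has $\Hom_{\cT(T^*M)}(F_2,F_1[1])=0$, which splits the triangle (\cref{lem:directsum_decomp}). In particular, cohomological chordlessness plays no role in the proof of \cref{thm:gammasupp_connected}; that notion is used only for \cref{thm:main}.

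\textbf{The endomorphism ring.} Your approach via \cref{lem:limcolim} is reasonable in spirit, but you should justify that the transition maps on $H^0$ are isomorphisms and that no $\lim^1$ term in degree $-1$ intervenes. The paper avoids this by using continuity of the internal $\homst$ for the interleaving distance, the computation $\rR q_*\homst(F_{\widetilde L_n},F_{\widetilde L_n})\simeq\RGamma(M;\bfk_M)\otimes\bfk_{[0,+\infty[}$ for smooth $\widetilde L_n$, and the fact that objects at interleaving distance~$0$ are isomorphic (\cref{lem:Linfty_end}).
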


\begin{lemma}\label{lem:Linfty_end}
	For any $\widetilde L_\infty \in \widehat{\cL}(T^*M)$, one has $\End_{\cT(T^*M)}(F_{\widetilde L_\infty}) \simeq \bfk$.
\end{lemma}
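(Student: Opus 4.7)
My plan is to deduce $\End_{\cT(T^*M)}(F_{\widetilde L_\infty}) \simeq \bfk$ from the classical analogue for smooth compact exact (connected) Lagrangian branes, by passing to $\End$ along a Cauchy approximation.

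First, since $\widetilde L_\infty \in \widehat{\cL}(T^*M)$, it is represented by a Cauchy sequence $(\widetilde L_n)_n$ in $\cL(T^*M)$. By \cite[Proposition~6.25]{GV2022singular} (or \cite[Theorem~4.3]{AI22completeness}), after extracting a subsequence there exist positive sequences $\varepsilon_n, \delta_n \to 0$ and compatible systems of morphisms such that $F_{\widetilde L_\infty} \simeq \hocolim_n T_{-\varepsilon_n*} F_{L_n} \simeq \holim_n T_{\delta_n*} F_{L_n}$ in $\cT(T^*M)$. Applying \cref{lem:limcolim} together with the cofinality argument at the end of its proof (the inclusion of the diagonal $\bN^\op \hookrightarrow (\bN^\op)^2$ is initial), I obtain
\[
    \RHom_{\cT(T^*M)}(F_{\widetilde L_\infty}, F_{\widetilde L_\infty}) \simeq \holim_n \RHom_{\cT(T^*M)}(F_{L_n}, T_{c_n*}F_{L_n}),
\]
with $c_n \coloneqq \varepsilon_n + \delta_n \to 0^+$.

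Next, each $F_{L_n}$ is cohomologically chordless by \cref{lem:caractSSminimal0}, so $\RHom(F_{L_n}, T_{c*}F_{L_n}) = 0$ for $c < 0$. For $c > 0$ smaller than the shortest Reeb chord of the Legendrian lift of $L_n$, $\MS(F_{L_n})$ and $T_c(\MS(F_{L_n}))$ are disjoint, and by \cite[Corollary~5.4.19]{KS90} the assignment $c \mapsto \RHom(F_{L_n}, T_{c*}F_{L_n})$ is locally constant on an interval $(0, c^\star_n)$. Combined with $\End(F_{L_n}) \simeq \bfk$ at $c = 0$ (classical for smooth connected Lagrangian branes, which can also be read off from \cref{prop:caracterisation-F_L}(i) applied to the smooth $\widetilde L_n$ together with the computation in \cref{lem:intervalles} giving $\End_{\cT(\pt)}(\bfk_{[c_0,+\infty[}) \simeq \bfk$), a careful tracking of the unit $\tau_c$ through the jump at $c = 0$ yields $\Hom_{\cT(T^*M)}(F_{L_n}, T_{c*}F_{L_n}) \simeq \bfk$ for every $c \in [0, c^\star_n)$.

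Finally, for $n$ large enough $c_n < c^\star_n$, so every term in the diagonal $\bN^\op$-system is $\bfk$. The transition maps are induced by the Cauchy-sequence morphisms; restricting to $M \times \{t\}$ for $t \gg 0$ (where $F_{L_n}|_{M \times \{t\}} \simeq \bfk_M$ and each Cauchy morphism restricts to $\id_{\bfk_M}$), they are non-zero maps $\bfk \to \bfk$, hence isomorphisms. Taking the homotopy limit in degree $0$ then yields $\End(F_{\widetilde L_\infty}) \simeq \bfk$. I expect the main obstacle to be the Morse-theoretic identification of $\Hom(F_{L_n}, T_{c*}F_{L_n})$ with $\bfk$ for small $c > 0$: the value jumps across $c = 0$ from zero on the negative side to some constant on $(0, c^\star_n)$, and one must verify that the unit $\tau_c$ generates it throughout, which requires a careful analysis of the distinguished triangle produced at the singular value $c = 0$.
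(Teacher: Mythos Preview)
Your holim/hocolim strategy together with \cref{lem:limcolim} is a reasonable route in principle, but the step ``for $n$ large enough $c_n < c^\star_n$'' is a genuine gap. The shortest Reeb chord length $c^\star_n$ of the Legendrian lift of $L_n$ is a geometric quantity with no a priori relation to the Cauchy rate $c_n = \varepsilon_n + \delta_n$. When $\gammasupp(L_\infty)$ has double points---which does occur, see Example~6.22 in~\cite{viterbo2022supports}---any smooth approximating sequence develops arbitrarily short Reeb chords, and nothing forces $c^\star_n$ to dominate $c_n$; passing to a further subsequence does not help, since both quantities are attached to the same term $L_n$ and subsequencing cannot improve their ratio. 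Your own closing caveat already flags the identification on $(0,c^\star_n)$ as delicate; the more serious problem is that this interval may simply fail to contain $c_n$.

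The paper's proof avoids this comparison entirely by packaging all the translates into the internal hom. For smooth $\widetilde L_n$ one has $\rR q_* \homst(F_{\widetilde L_n},F_{\widetilde L_n}) \simeq \RGamma(M;\bfk_M) \otimes \bfk_{[0,+\infty[}$ by~\cite[Proposition~9.11]{Viterbo-Sheaves}; since $\homst$ is Lipschitz for the interleaving distance and the right-hand side is independent of $n$, the same isomorphism passes directly to $F_{\widetilde L_\infty}$, from which one reads off $\End(F_{\widetilde L_\infty}) \simeq H^0 \RGamma(M;\bfk_M) \simeq \bfk$. No chord-length bound, no transition-map check, no $\lim^1$ term. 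If you want to repair your argument, the natural fix is to replace the Reeb-chord computation of $\RHom(F_{L_n}, T_{c_n*}F_{L_n})$ by this global identification; but once you invoke it, the holim machinery becomes redundant and you are essentially reproducing the paper's argument with extra bookkeeping.
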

\begin{proof}
    Write $\widetilde L_\infty$ as the equivalence class of a Cauchy sequence $(\widetilde L_n)_n$, where $\widetilde L_n \in \cL(T^*M)$. 
	Then, we find that $\rR q_* \homst(F_{\widetilde L_n},F_{\widetilde L_n}) \simeq \RGamma(M;\bfk_M) \otimes \bfk_{[0,+\infty[}$ for any $n$ by \cite[Proposition~9.11]{Viterbo-Sheaves}. 
	Since $\gamma(\widetilde L_n,\widetilde L_\infty) \to 0$ and $\homst$ is continuous for the interleaving distance, we find that $\rR q_*\homst(F_{\widetilde L_n},F_{\widetilde L_n})$ converges to $\rR q_*\homst(F_{\widetilde L_\infty},F_{\widetilde L_\infty})$. 
	This implies that 
    \[
        d_{\cT(\pt)}(\rR q_* \homst(F_{\widetilde L_\infty},F_{\widetilde L_\infty}),\RGamma(M;\bfk_M) \otimes \bfk_{[0,+\infty[})=0 ,
    \]
    from which we deduce $\rR q_* \homst(F_{\widetilde L_\infty},F_{\widetilde L_\infty}) \simeq \RGamma(M;\bfk_M) \otimes \bfk_{[0,+\infty[}$ by \cite[Proposition~B.8]{GV2022singular}. 
	Thus, we obtain
	\[
		\Hom(F_{\widetilde L_\infty},F_{\widetilde L_\infty}) \simeq H^0\RHom(\bfk_{[0,+\infty[},\rR q_* \homst(F_{\widetilde L_\infty},F_{\widetilde L_\infty}))\simeq \bfk,
	\]
	which proves the lemma.
\end{proof}

Our next Lemma is a variant of microlocal cut-off lemma. A cut-off functor associated with an open subset $\Omega$ of a cotangent bundle sends a sheaf $F$ to a sheaf $F'$ such that $\MS(F') \subset \overline{\Omega}$ and $\MS(F') \cap \Omega = \MS(F)$. Such functors were first introduced in \cite{KS90} for special cases of $\Omega$ and more recently in \cite{Agnolo-cut-off, Chiu17,zhang2024capacities,zhang2023idempotence,Kuowrap, KSZ23,Bingyucut-off}. In general $\MS(F') \cap \partial\Omega$ will not be bounded by $\MS(F) \cap \partial\Omega$. However, when $\MS(F) \cap \partial\Omega$ is empty, we check that it holds true.

\begin{lemma}\label{lem:projector_cutoff}
    Let $U$ be an open subset of $T^*M$ and $F \in \cT(T^*M)$.
    Assume that $\RS(F)\cap U$ is compact and $\RS(F) \cap \partial U = \varnothing$, where  $\partial U$ is the topological boundary defined as $\overline{U} \setminus U$.
    Then one has an exact triangle 
    \[
        P(U) \convstar F \to F \to Q(U) \convstar F \xrightarrow{+1}
    \]
    with $\RS(P(U) \convstar F) = \RS(F) \cap U$ and $\RS(Q(U) \convstar F) = \RS(F) \cap (T^*M \setminus \overline{U})$.
    Here $P(U), Q(U) \colon \cT(T^*M) \to \cT(T^*M)$ are the microlocal projectors associated with $U$ (see \cite{Chiu17,zhang2024capacities,zhang2023idempotence}).
\end{lemma}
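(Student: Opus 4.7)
The plan is to combine the existence and standard microsupport bounds of the microlocal projectors from \cite{Chiu17,zhang2024capacities,zhang2023idempotence} with the hypothesis that $\RS(F) \cap U$ is compact and disjoint from $\partial U$ in order to rule out any microsupport contribution on the boundary. The standard theory yields the exact triangle together with inclusions $\RS(P(U) \convstar F) \subset \overline{U}$ and $\RS(Q(U) \convstar F) \cap U = \varnothing$; to get the stronger conclusion of the lemma we must show that the microsupports of $P(U) \convstar F$ and $Q(U) \convstar F$ do not meet $\partial U$ at all.

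First, I would invoke the construction of $P(U)$ and $Q(U)$ to obtain the exact triangle together with the above inclusions. Applying the microsupport triangle inequality to this exact triangle on the open sets $U$ and $T^*M \setminus \overline{U}$ immediately gives
\[
\RS(P(U) \convstar F) \cap U = \RS(F) \cap U
\quad\text{and}\quad
\RS(Q(U) \convstar F) \cap (T^*M \setminus \overline{U}) = \RS(F) \cap (T^*M \setminus \overline{U}),
\]
since $\RS(Q(U) \convstar F) \cap U = \varnothing$ and $\RS(P(U) \convstar F) \cap (T^*M \setminus \overline{U}) = \varnothing$.

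Next, I would exploit the gap between $\RS(F) \cap U$ and $\partial U$. Since $\RS(F) \cap U$ is compact and $\RS(F) \cap \partial U = \varnothing$, I can choose open sets $V$ and $U'$ of $T^*M$ satisfying $\RS(F) \cap U \subset V$, $\overline{V} \subset U$, $\overline{U} \subset U'$, and $\RS(F) \cap (U' \setminus V) = \varnothing$. The key step, a form of excision for the cut-off functors, is to identify $P(V) \convstar F \simeq P(U) \convstar F \simeq P(U') \convstar F$ (and similarly for $Q$). Granting this, the standard inclusions applied to $V$ and $U'$ respectively yield $\RS(P(U) \convstar F) \subset \overline{V} \subset U$ and $\RS(Q(U) \convstar F) \subset T^*M \setminus U' \subset T^*M \setminus \overline{U}$, which together with the previous step gives the claimed equalities.

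The main obstacle is the excision/comparison isomorphism, which morally expresses the fact that $P(U) \convstar F$ depends only on the restriction of $F$ to a neighborhood of $\RS(F) \cap U$. I expect this to follow either from idempotency-type identities in the cited references (such as $P(V) \convstar P(U) \convstar F \simeq P(V) \convstar F$ for $V \subset U$), combined with the vanishing of the projector on sheaves whose microsupport avoids the relevant annular region, or else to require a direct analysis of the cone of the comparison morphism $P(V) \convstar F \to P(U) \convstar F$, whose microsupport should be shown to be empty on each of $V$, on the annular region $U' \setminus \overline{V}$ (using that $\RS(F)$ avoids it), and on $T^*M \setminus \overline{U'}$.
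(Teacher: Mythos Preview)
Your overall strategy of using formal properties of the projectors plus an excision step is natural, but the excision step has a genuine circularity that you do not resolve. In the idempotency approach you would need $Q(V)\convstar(P(U)\convstar F)\simeq 0$, and the only way to get this from the standard properties is to know that $\RS(P(U)\convstar F)$ is compactly contained in $V$; but that is precisely (a sharpening of) the boundary control you are trying to prove. Your alternative, the direct cone analysis of $P(V)\convstar F\to P(U)\convstar F$, suffers the same problem: on the open pieces $V$, $U\setminus\overline V$, and $T^*M\setminus\overline U$ the cone is indeed microlocally zero, but this only yields $\RS(\text{cone})\subset\partial V\cup\partial U$, not $\varnothing$. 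Nothing in your outline explains why the cone cannot pick up microsupport along these hypersurfaces, and the cited references do not supply such a statement off the shelf.

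The paper's proof sidesteps this entirely by working with Kuo's wrapping model. Writing $A_1=\RS(F)\cap U$ and $A_2=\RS(F)\setminus A_1$, one chooses a cofinal sequence $H_n\in C_c^\infty(U)$ with $H_n\equiv n$ on a neighborhood of $A_1$; then $dH_n$ vanishes on a neighborhood of $\RS(F)=A_1\cup A_2$ (near $A_1$ because $H_n$ is constant there, near $A_2$ because $H_n$ is supported in $U$ while $A_2\subset T^*M\setminus\overline U$). Hence each Hamiltonian flow fixes $\RS(F)$, so $\RS(\cK_{H_n}\convstar F)=\RS(F)$, and passing to the homotopy colimit gives $\RS(Q(U)\convstar F)\subset\RS(F)$. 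Intersecting with the formal bound $\RS(Q(U)\convstar F)\subset T^*M\setminus U$ yields $\RS(Q(U)\convstar F)\subset A_2$, and the triangle inequality then forces $\RS(P(U)\convstar F)=A_1$. The point is that the special choice of $H_n$ gives a direct bound $\RS(Q(U)\convstar F)\subset\RS(F)$, which is exactly the missing ingredient in your excision argument.
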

\begin{proof}
    Set $A_1 \coloneqq \RS(F) \cap U$ and $A_2 \coloneqq \RS(F) \setminus A_1$.
    To show $P(U) \convstar F \in \cT_{A_1}(T^*M)$ and $Q(U) \convstar F \in \cT_{A_2}(T^*M)$, we use (a version of) Kuo's description of projectors by microlocal wrapping~\cite{Kuowrap, KSZ23}. 
    
    Let $C_c^{\infty}(U)$ be the poset of compactly supported smooth functions on $U$ and $H_\bullet\colon \bN\to C_c^{\infty}(U)$ be a final functor satisfying $H_n\equiv n $ on a neighborhood of $A_1$ for each $n$. 
    The microlocal projector $Q(U)$ is described as  (see after~\eqref{eq:sequence_morphisms} for the notation $\hocolim$) 
    \[
        Q(U)\simeq \hocolim_{n\in \bN} \cK_{H_n} .
    \]

    Since $\hocolim$ commutes with $\convstar$, 
    $Q(U) \convstar F\simeq \hocolim_n (\cK_{H_n}\convstar F)$. 
    Since $d H_n$ vanishes on a neighborhood of $\RS (F)$, $\RS(\cK_{H_n}\convstar F) =\RS(F)=A_1\cup A_2$. 
    Hence $\RS(\hocolim_n (\cK_{H_n}\convstar F))\subset A_1\cup A_2$. 
    On the other hand, $\RS(Q(U) \convstar F)\subset T^*M\setminus U$ by a formal property of the projector $Q(U)$ and hence, 
    \[
        \RS(Q(U) \convstar F)\subset (T^*M\setminus U)\cap (A_1\cup A_2)=A_2.
    \]
    Since the morphism $F\to Q(U) \convstar F$ is an isomorphism on $T^*M\setminus \overline{U}$, 
    $(P(U) \convstar F) \cap (T^*M\setminus \overline{U})=\varnothing$. 
    The triangle inequality for microsupports shows $\RS (P(U) \convstar F)=A_1$.
\end{proof}

The next lemma is a wide generalization of \cite[Proposition~3.3.2]{guillermou19}, where the result is local and the sets $A_1$, $A_2$ are supposed ``unknotted''.

\begin{lemma}\label{lem:directsum_decomp}
	Let $F \in \cT(T^*M)$ and assume that $\RS(F)$ is decomposed into two compact disjoint subsets $A_1$ and $A_2$.
	Then there exist $F_1, F_2 \in \cT(T^*M)$ such that $\RS(F_i)=A_i$ and $F \simeq F_1 \oplus F_2$.
\end{lemma}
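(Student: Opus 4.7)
The natural strategy is to produce $F_1, F_2$ as microlocal cut-offs of $F$ and then show that the induced map $F_1 \oplus F_2 \to F$ is an isomorphism by reduced-microsupport bookkeeping using \cref{lem:projector_cutoff}. Since $A_1, A_2$ are disjoint compact subsets of $T^*M$, I can choose open neighborhoods $U_i \supset A_i$ with $\overline{U_1} \cap \overline{U_2} = \varnothing$ and with $\partial U_i$ disjoint from $\RS(F) = A_1 \cup A_2$. Set $F_i \coloneqq P(U_i) \convstar F$. By \cref{lem:projector_cutoff}, each $F_i$ sits in an exact triangle $F_i \to F \to Q(U_i) \convstar F \xrightarrow{+1}$ and satisfies $\RS(F_i) = A_i$; let $\iota_i \colon F_i \to F$ denote the first arrow.

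I shall prove that the morphism $\alpha \coloneqq (\iota_1, \iota_2) \colon F_1 \oplus F_2 \to F$ is an isomorphism. Applying \cref{lem:projector_cutoff} to the sheaf $F_2$ and the open $U_1$ (the hypotheses hold since $\RS(F_2) = A_2$ is compact and disjoint from $\overline{U_1}$, hence from $\partial U_1$), the reduced microsupport of $P(U_1) \convstar F_2$ equals $A_2 \cap U_1 = \varnothing$, so $P(U_1) \convstar F_2 \simeq 0$. Applying the same lemma to $F_1$ and $U_1$ shows $\RS(Q(U_1) \convstar F_1) = A_1 \setminus \overline{U_1} = \varnothing$, hence $P(U_1) \convstar F_1 \to F_1$ is an isomorphism (this is the idempotence of $P(U_1)$ on $F_1$). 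Therefore $P(U_1) \convstar \alpha$ identifies with the identity of $F_1$; by symmetry, the same holds for $P(U_2)$. Consequently, if $C$ denotes the cone of $\alpha$, then $P(U_i) \convstar C \simeq 0$ for $i = 1, 2$.

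Finally, the defining triangle of $C$ gives $\RS(C) \subset A_1 \cup A_2$, which is disjoint from $\partial U_1 \cup \partial U_2$, so \cref{lem:projector_cutoff} applies to $C$ with either $U_i$. The vanishing $P(U_1) \convstar C \simeq 0$ then reads $C \simeq Q(U_1) \convstar C$ with $\RS(C) \subset T^*M \setminus \overline{U_1}$, which excludes $A_1$; symmetrically $\RS(C) \cap \overline{U_2} = \varnothing$, which excludes $A_2$. Hence $\RS(C) = \varnothing$. Any such object vanishes in $\cT(T^*M)$: since $\MS(C) \subset \{\tau \ge 0\}$ for objects in the left orthogonal to $\SD_{\{\tau \le 0\}}$, empty $\RS$ forces $\MS(C) \subset \{\tau = 0\}$, so $C$ lies simultaneously in $\SD_{\{\tau \le 0\}}$ and its left orthogonal, making $\id_C = 0$. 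Thus $C \simeq 0$ and $\alpha$ is an isomorphism.

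The only real subtlety lies in the careful use of the cut-off lemma to justify both the idempotence $P(U_i) \convstar F_i \simeq F_i$ and the orthogonality $P(U_i) \convstar F_j \simeq 0$ for $i \ne j$; once these two formal consequences are in hand, the conclusion is forced by the microsupport triangle inequality. I do not foresee a serious obstacle beyond checking that the chosen neighborhoods $U_i$ always satisfy the boundary hypothesis of \cref{lem:projector_cutoff}, which is guaranteed by the disjointness and compactness of $A_1, A_2$.
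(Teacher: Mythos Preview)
Your argument is correct, but it takes a different route from the paper's proof. The paper uses only \emph{one} projector: it sets $F_1 = P(U)\convstar F$ and $F_2 = Q(U)\convstar F$ for a single open $U$ separating $A_1$ from $A_2$, and then invokes Tamarkin's separation theorem to conclude that $\Hom_{\cT(T^*M)}(F_2, F_1[1]) = 0$, which forces the distinguished triangle $F_1 \to F \to F_2 \xrightarrow{+1}$ to split.

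Your approach instead uses \emph{two} projectors $P(U_1)$, $P(U_2)$, builds the candidate splitting map $\alpha$ directly, and then kills the cone of $\alpha$ by a microsupport bookkeeping argument based on repeated applications of \cref{lem:projector_cutoff}. This is somewhat longer and requires checking idempotence and orthogonality of the projectors on the pieces, but it has the virtue of being self-contained: you never need to cite Tamarkin's separation theorem. In effect, the vanishing you prove (that $P(U_i)\convstar F_j \simeq 0$ for $i\neq j$ and that $\RS(C)=\varnothing$ implies $C\simeq 0$) is a hands-on substitute for that theorem in this specific situation. The paper's argument is shorter and more conceptual; yours is more elementary. Both are valid.
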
	
\begin{proof}
	Take an open neighborhood $U$ of $A_1$ such that $\overline{U} \cap A_2 = \varnothing$.
    Applying \cref{lem:projector_cutoff}, we have an exact triangle in $\cT(T^*M)$
	\[
		P(U) \convstar F \to F \to Q(U) \convstar F \xrightarrow{+1}
	\]
	with $P(U) \convstar F \in \cT_{A_1}(T^*M)$ and $Q(U) \convstar F \in \cT_{A_2}(T^*M)$. 
	Set $F_1 \coloneqq P(U) \convstar F$ and $F_2 \coloneqq Q(U) \convstar F$. 
	Since $A_1 \cap A_2 = \varnothing$ and each $A_i$ is compact, by Tamarkin's separation theorem we have $\Hom_{\cT(T^*M)}(F_2,F_1[1])=0$. 
	Then by the above exact triangle, $F \simeq F_1 \oplus F_2$.
\end{proof}

\begin{proof}[Proof of \cref{thm:gammasupp_connected}]
	Suppose that $\gammasupp(L_\infty)$ is decomposed into two non-empty compact disjoint subsets $A_1$ and $A_2$.
    Let $\widetilde L_\infty$ be a lift of $L_\infty$.
	Set $F_{\widetilde L_\infty}=Q(\widetilde L_\infty) \in \cT(T^*M)$.  
	By a result of \cite{AGHIV}, we have $\gammasupp(L_\infty)=\RS(F_{\widetilde L_\infty})$. 
	By \cref{lem:directsum_decomp}, there exist $F_i \in \cT_{A_i}(T^*M)$ such that $F_{\widetilde L_\infty} \simeq F_1 \oplus F_2$.
	Since $F_{\widetilde L_\infty}$ is indecomposable by \cref{lem:Linfty_end}, either $F_1$ or $F_2$ is zero.
	This is a contradiction.
\end{proof}

\begin{remarks}
    \begin{enumerate}
        \item The connectedness does not hold for  elements in $\widehat{\mathfrak L}(T^*M)$ with non-compact $\gamma$-support. Indeed, consider the situation in \cref{Figure-cusp}.
        Since $f_j$ $C^0$-converges to $f$, $L_j=\mathrm{graph}(df_j)$ $\gamma$-converges to some $L_\infty$ in $\widehat{\mathfrak L}(T^*S^1)$. Clearly the $\gamma$-support of $L_\infty$ is the union of the two connected components represented in \cref{Figure-cusp}(b).
        \item One can construct examples of sequences $F_i$ of elements in $\cT(T^*M)$ such that $\RS(F_i)$ remain in a fixed compact set, are connected, $F_i$ converges to $F$ for $d_{\cT(T^*M)}$, but $\RS(F)$ is not connected. As a result $F$ is not in the image of $\widehat{Q}$.  
    \end{enumerate}
\end{remarks}

\begin{figure}[H]
	\centering
	\begin{minipage}[c]{0.49\hsize}
		\centering
		\tikz[overlay]{
			\node at (3.2,0.9) {$f_j$};
			\node at (4.5,1.5) {$f=\lim f_j$};
		}
		\includegraphics[width=7cm]{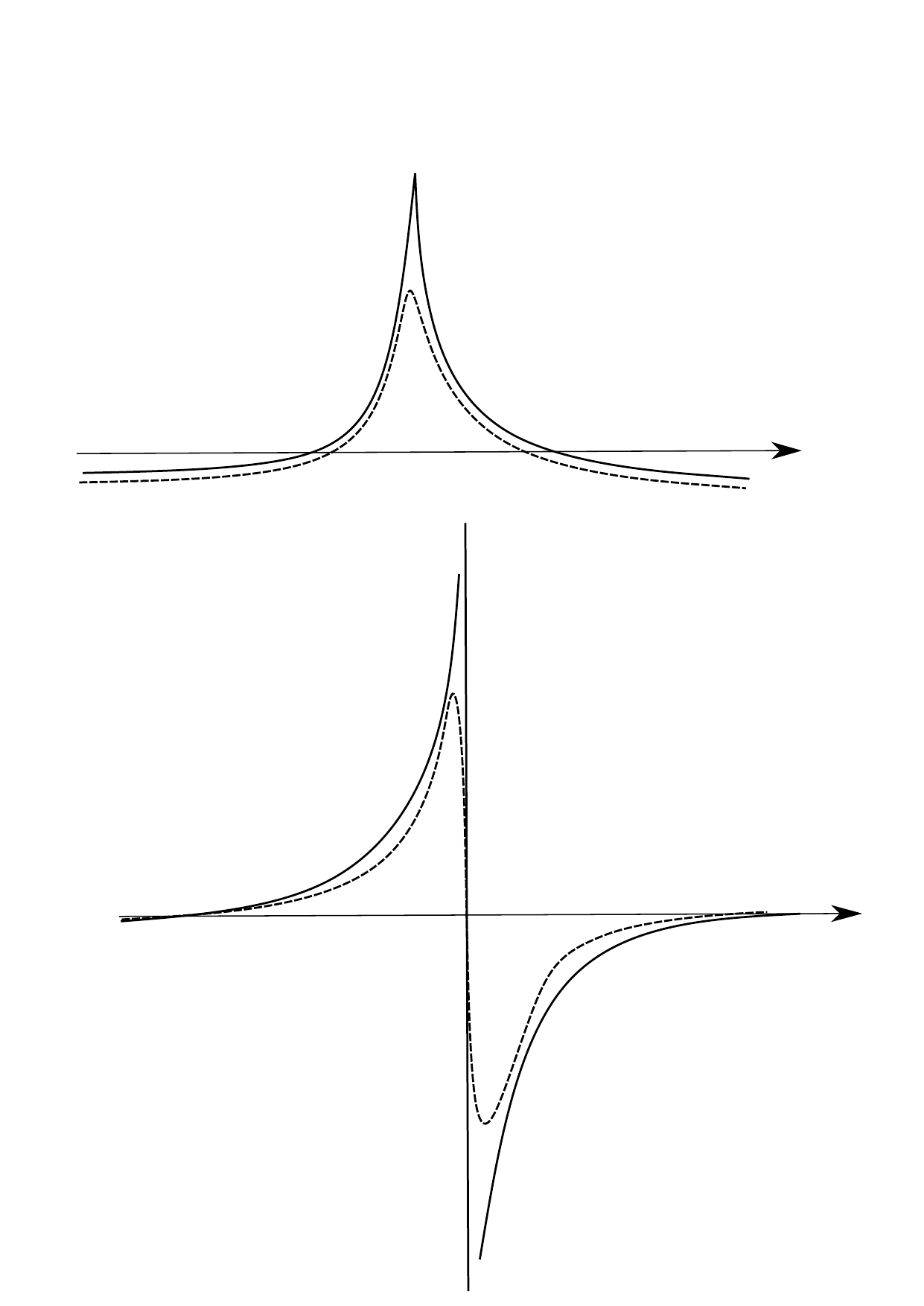}
		\subcaption{}
	\end{minipage}
	\begin{minipage}[c]{0.49\hsize}
		\centering
		\tikz[overlay]{
			\node at (2.2,2.9) {$L_j=\mathrm{graph} (df_j)$};
			\node at (2.9,4.3) {$L_\infty$};
		}
		\includegraphics[width=7cm]{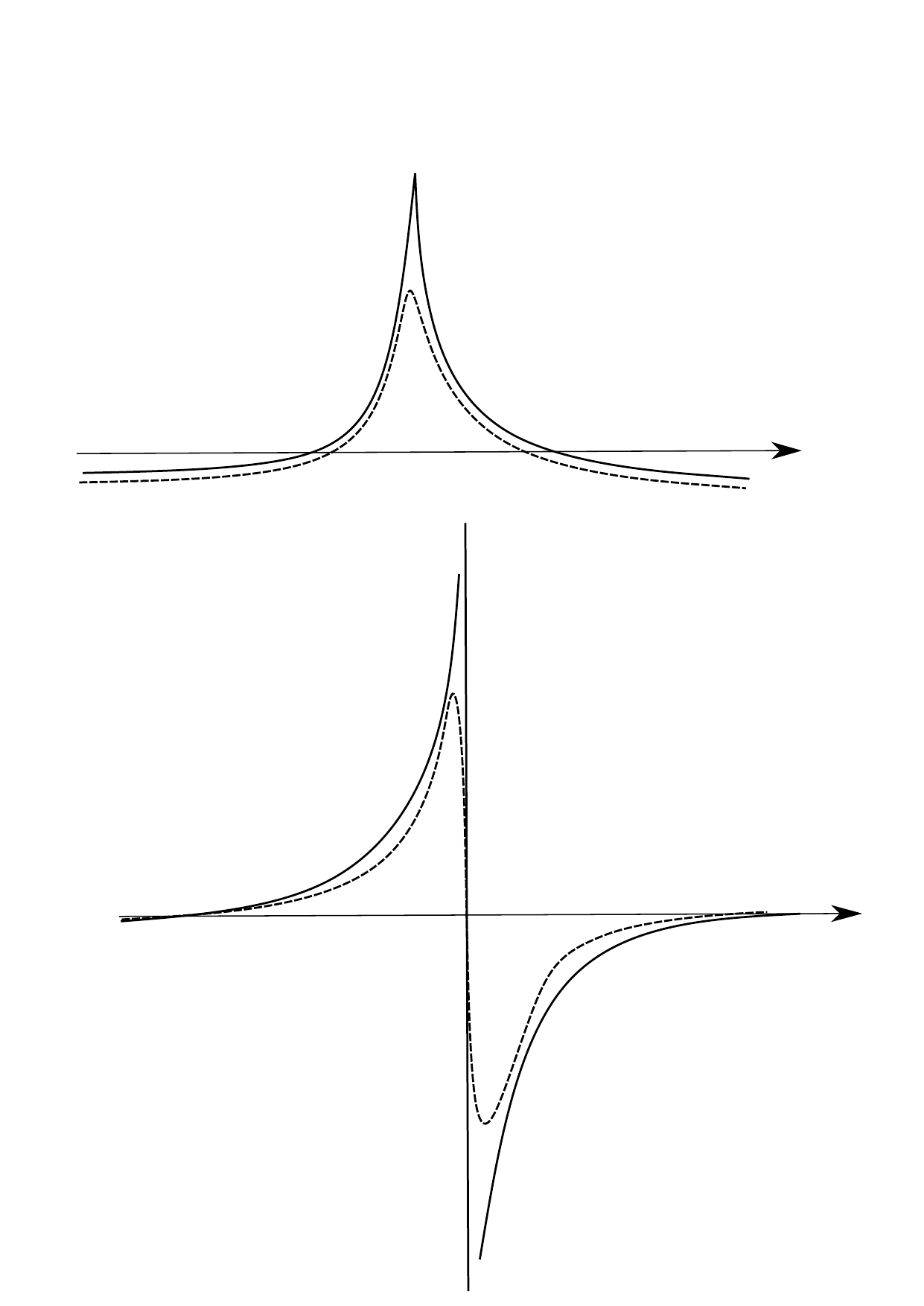}
		\subcaption{}
	\end{minipage}
	\caption{An element $L_\infty=\gamma\mathchar`-\lim(L_j)$ in $\widehat{\mathfrak L}(T^*S^1)$ with non-compact and disconnected $\gamma$-support. Figure (b) is the differential of Figure (a), the $f_j, L_j$ correspond to the dashed curves, $f,L_\infty$ to the solid curves. }\label{Figure-cusp}
\end{figure}

\appendix

\section{The weak nearby Lagrangian conjecture for the sheaf-theoretic spectral metric}\label{sec:weak_nearby_sheafgamma}

In this appendix, we prove a sheaf-theoretic version of a result of Arnaud, Humili\`ere, and Viterbo~\cite{AHV2024higher}. 
The following theorem is for the sheaf-theoretic spectral metric~$\gamma^s$.

\begin{theorem}[{cf.~\cite{AHV2024higher}}]\label{thm:nearbyconj_appendix} 
    Let $L \in \frakL(T^*M)$ be a compact exact Lagrangian submanifold of $T^*M$.
    Then, there exists $\varphi_\infty \in \widehat{\DHam_c}(T^*M)$ such that $\varphi_\infty(L)=0_M$, where both sides should be understood as elements in $\widehat{\frakL}(T^*M)$.
	Moreover, the functor $\cK_{\varphi_\infty}^\convstar \colon \cT(T^*M^2) \to \cT(T^*M^2)$ sends $\cT_{L}(T^*M)$ to $\cT_{0_M}(T^*M)$.
\end{theorem}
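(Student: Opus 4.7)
The plan is to adapt the construction of Arnaud--Humili\`ere--Viterbo from the usual spectral metric $\gamma$ to the sheaf-theoretic spectral metric $\gamma^s$, keeping the overall geometric strategy of \cite{AHV2024higher} but replacing their spectral-invariant estimates by sheaf-theoretic interleaving estimates. Their argument produces an iterated sequence $\varphi_n \in \DHam_c(T^*M)$, each generated by an explicit squeezing Hamiltonian, such that $\varphi_n(L)$ converges to $0_M$ in $\gamma$; the goal is to ensure the same sequence (or a suitable variant) is Cauchy in $\gamma^s$.

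The key sheaf-theoretic input is \cref{lem:Ham-approx}. If $\varphi_{n+1}\circ\varphi_n^{-1}$ is generated by a compactly supported time-dependent Hamiltonian $h^{(n)}_s$ on $T^*M$, then lifting to the conification at level $\tau=1$ gives
\[
 d_{\cT(T^*M^2)}(\cK_{\varphi_n}, \cK_{\varphi_{n+1}})
 \leq 4 \int_0^1 \max |h^{(n)}_s(x,t;\xi,1)| \, ds.
\]
Hence, if the AHV sequence can be organized so that $\sum_n \int_0^1 \max |h^{(n)}_s| \, ds < +\infty$ (for instance via geometric decay of the successive squeezing Hamiltonians), then $(\cK_{\varphi_n})_n$ is Cauchy for $d_{\cT(T^*M^2)}$, which by definition is equivalent to $(\varphi_n)_n$ being Cauchy for $\gamma^s$.

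Granting this, completeness of $\cT(T^*M^2)$ \cite{AI22completeness,GV2022singular} produces a limit $\cK_{\varphi_\infty} \in \cTlc(T^*M^2)$, defining $\varphi_\infty \in \widehat{\DHam_c}(T^*M)$. Continuity of $\convstar$ for the interleaving distance shows that $\cK_{\varphi_n}^\convstar F_{\widetilde L}$ converges to $\cK_{\varphi_\infty}^\convstar F_{\widetilde L}$; combined with the isometry property of $Q$ and the already established $\gamma$-convergence $\varphi_n(L) \to 0_M$, we obtain $\cK_{\varphi_\infty}^\convstar F_{\widetilde L} \simeq F_{\widetilde{0_M}}$ up to translation in $t$, i.e.\ $\varphi_\infty(L) = 0_M$ in $\widehat{\frakL}(T^*M)$. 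The functorial claim that $\cK_{\varphi_\infty}^\convstar$ carries $\cT_L(T^*M)$ to $\cT_{0_M}(T^*M)$ follows from \cref{lem:conv=equiv}, together with the compatibility $\RS(\widehat Q(\widetilde L_\infty)) = \gammasupp(\widetilde L_\infty)$ from \cite{AGHIV} applied to sheaves in the image of $\widehat Q$, and the stability of $\RS$ under limits for objects with fixed reduced microsupport.

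The main obstacle is precisely the first step: arranging \emph{Hofer-summability} of the successive Hamiltonians, which is strictly stronger than the spectral-summability automatic from Cauchy convergence in $\gamma$ (since $\gamma^s \geq \gamma$ in general, with equality only in characteristic $2$ or when $M$ is spin). If the squeezings of~\cite{AHV2024higher} do not already satisfy this, the fix is to refine their iteration by interposing cut-off Hamiltonians supported in shrinking neighborhoods of the $\varphi_n(L)$, with $C^0$-norms controlled to decay geometrically; this forces Hofer-summability at the cost of a controlled perturbation of the sequence, without changing the $\gamma$-limit. All remaining sheaf-theoretic ingredients---completeness of $\cT(T^*M^2)$, continuity of $\convstar$ for the interleaving distance, and \cref{lem:Ham-approx}---are then essentially formal.
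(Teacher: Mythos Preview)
Your plan has the right instinct---transfer the AHV argument to $\gamma^s$---but it leaves the crucial step unproved and misidentifies the mechanism. The paper does \emph{not} use Hofer-type bounds (\cref{lem:Ham-approx}) to show the AHV sequence is $\gamma^s$-Cauchy. Instead, it proves that conjugation by the canonical Liouville time-$1$ map $\psi_0$ acts on sheaf quantizations by rescaling the $t$-variable: $\cK_{\psi_0^{-1}\varphi\psi_0} \simeq f_*\cK_\varphi$ with $f(x_1,x_2,t)=(x_1,x_2,e^{-1}t)$ (\cref{lem:SQ_conformal}). This immediately makes $T\colon \varphi\mapsto \psi_0^{-1}\varphi\psi_1$ an $e^{-1}$-contraction for $d_{\cT(T^*M^2)}$ itself, so the Banach fixed-point theorem applies directly in $\widehat{\DHam_c}(T^*M)$ with respect to $\gamma^s$, and no Hofer estimate is needed. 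Your proposed fix (``interpose cut-off Hamiltonians with geometrically decaying $C^0$-norm'') is both vague and unnecessary: in fact the AHV increments $\varphi_{n+1}\varphi_n^{-1}=\psi_0^{-n}h\,\psi_0^n$ (with $h=\psi_0^{-1}\psi_1$) already have Hofer energy $e^{-n}$ times that of $h$ by the same conformal rescaling (\cref{lem:hamiltonian_conformal}), so your route could be salvaged---but you do not identify this, and the contraction-mapping formulation is cleaner and yields the fixed point at once.

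Your argument for the second assertion is also gapped. You invoke \cref{lem:conv=equiv} and the identity $\RS(\widehat Q(\widetilde L_\infty))=\gammasupp(\widetilde L_\infty)$ from \cite{AGHIV}, but these apply only to sheaves in the image of $\widehat Q$, whereas the claim concerns \emph{all} $F\in\cT_L(T^*M)$. The paper instead uses the explicit representatives $\varphi_n=\psi_0^{-n}\psi_1^n$: since $\psi_1^n(L)=L$ and $\psi_0^{-n}(L)\to 0_M$ as sets, one has $\liminf_n\varphi_n(L)=0_M$, and then the microsupport semicontinuity of \cite[Proposition~6.26]{GV2022singular} gives $\RS(\cK_{\varphi_\infty}^\convstar F)\subset 0_M$ for every $F\in\cT_L(T^*M)$, not just those coming from $\widehat Q$.
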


	The zero-section $0_M$ is the fixed point set of the canonical Liouville flow. 
	We set $\psi_0^s$ to be the time-$s$ map of the canonical Liouville flow and set $\psi_0\coloneqq \psi_0^1$. 
	The map $\psi_0^s$ is the multiplication by $e^s$ on each fiber. 
	By \cite[Proposition~7.3]{AHV2024higher}, we see that $L$ is also the fixed point set of the Liouville flow of another Liouville 1-form which coincides with the canonical Liouville form outside a compact subset. 
	Set $\psi_1$ to be the time-1 map of this latter Liouville flow.
	
	We will find $\varphi_\infty$ as a fixed point of a contraction on $\widehat{\DHam_c}(T^*M)$. 
	To construct the contraction, we first note the following.    
	
	\begin{lemma}\label{lem:hamiltonian_conformal}
		Let $\psi \colon T^*M \to T^*M$ be a diffeomorphism such that $\psi^*\lambda=a \lambda$ for some $a>0$. 
		Moreover, let $H \colon T^*M \times I \to \bR$ be a compactly supported function and set $\tilde{H} \coloneqq a^{-1}H \circ \psi$. 
		Then, $\psi^{-1} \circ \phi^H_s \circ \psi = \phi^{\tilde{H}}_s$ and $\lambda(X_{\tilde{H}_s}) = a^{-1} \lambda(X_{H_s}) \circ \psi$ for $s \in I$.
	\end{lemma}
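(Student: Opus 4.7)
The plan is a direct computation based on the defining identity $\iota_{X_H}\omega = dH$ for Hamiltonian vector fields (with $\omega = -d\lambda$) and the fact that $\psi^*\lambda = a\lambda$ upgrades to $\psi^*\omega = a\omega$, so $\psi$ is conformally symplectic with factor $a$.

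First I would establish the vector-field identity $X_{\tilde H_s} = \psi^* X_{H_s}$, where $\psi^* Y := (T\psi)^{-1} \circ Y \circ \psi$ denotes the pullback vector field. This is the heart of the lemma. Starting from $d\tilde H = a^{-1}\, d(H\circ\psi) = a^{-1} \psi^* dH$, I use the naturality of contraction, $\psi^*(\iota_Y \alpha) = \iota_{\psi^* Y}(\psi^* \alpha)$, to compute
\[
\iota_{X_{\tilde H_s}}\omega = a^{-1}\psi^*(\iota_{X_{H_s}}\omega) = a^{-1}\,\iota_{\psi^* X_{H_s}}(\psi^*\omega) = a^{-1}\,\iota_{\psi^* X_{H_s}}(a\omega) = \iota_{\psi^* X_{H_s}}\omega,
\]
and conclude by non-degeneracy of $\omega$.

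Next I deduce the first claimed identity on flows. Given a trajectory $\gamma(s) = \phi^H_s(x)$ of $X_H$, the curve $\eta(s) = \psi^{-1}(\gamma(s))$ satisfies
$\dot\eta(s) = T\psi^{-1}\bigl(X_{H_s}(\psi(\eta(s)))\bigr) = (\psi^* X_{H_s})(\eta(s)) = X_{\tilde H_s}(\eta(s))$,
so $\psi^{-1}\circ\phi^H_s\circ\psi$ is the flow of $X_{\tilde H_s}$ starting at $\psi^{-1}(x)$; since initial data are arbitrary, this flow equals $\phi^{\tilde H}_s$.

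Finally, for the primitive identity, I rewrite $\psi^*\lambda = a\lambda$ as $\lambda_p\bigl(T\psi^{-1}_{\psi(p)}(Z)\bigr) = a^{-1}\lambda_{\psi(p)}(Z)$ for $Z \in T_{\psi(p)}(T^*M)$, and apply this with $Z = X_{H_s}(\psi(p))$ to obtain
\[
\lambda_p\bigl(X_{\tilde H_s}(p)\bigr) = \lambda_p\bigl(T\psi^{-1}(X_{H_s}(\psi(p)))\bigr) = a^{-1}\lambda_{\psi(p)}\bigl(X_{H_s}(\psi(p))\bigr) = a^{-1}\,(\lambda(X_{H_s})\circ\psi)(p).
\]

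There is no real obstacle; the computation is routine. The only delicate point is keeping conventions consistent, namely the sign of $\omega$ relative to $d\lambda$ and the distinction between $\psi^* Y$ and $\psi_* Y$ on vector fields, which is why I make the identification $\psi^* Y = (T\psi)^{-1}\circ Y\circ\psi$ explicit before using naturality of $\iota$.
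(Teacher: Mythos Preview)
Your proof is correct and follows essentially the same route as the paper: both establish $X_{\tilde H_s}=\psi^*X_{H_s}$ via the naturality identity $\psi^*(\iota_Y\alpha)=\iota_{\psi^*Y}(\psi^*\alpha)$ together with $\psi^*\omega=a\omega$, deduce the flow conjugation from this, and obtain the second equality by applying $\psi^*\lambda=a\lambda$ to $\iota_{X_{\tilde H_s}}\lambda$. Your version is slightly more explicit (writing out the integral-curve argument and the pointwise form of the primitive identity), but the content is identical.
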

	
	\begin{proof}
		For $s \in I$, we have 
		\begin{align*}
			\iota_{X_{\tilde{H}_s}} \omega 
			& = -d (a^{-1} H_s \circ \psi) \\
			& = -a^{-1} \psi^* dH_s \\
			& = a^{-1} \psi^* (\iota_{X_{H_s}}\omega) \\
			& = a^{-1} \iota_{\psi^* X_{H_s}} (\psi^*\omega) \\
			& = \iota_{\psi^* X_{H_s}} \omega,
		\end{align*}
		where $\psi^* X \coloneqq (d\psi)^{-1} X \circ \psi$.
		This shows that $X_{\tilde{H}_s} = \psi^* X_{H_s}$, which implies $\phi^H_s \circ \psi = \psi \circ \phi^{\tilde{H}}_s$.
		Moreover, since $\lambda=a^{-1}\psi^*\lambda$, we have 
		\[
			\iota_{X_{\tilde{H}_s}}\lambda 
			= \iota_{\psi^* X_{H_s}} \lambda 
			= a^{-1} \iota_{\psi^* X_{H_s}}(\psi^*\lambda) 
			= a^{-1} \psi^* \iota_{X_{H_s}}\lambda,
		\]
		which proves the second equality.
	\end{proof}
	
	\begin{lemma}\label{lem:SQ_conformal}
	    For a compactly supported function $H \colon T^*M \times I \to \bR$, let $K_{\phi^H} \in \SD(M^2 \times I \times \bR)$ be the sheaf quantization of the Hamiltonian isotopy $\phi^H$. 
		Then, for such a function $H$, one has $K_{\psi_0^{-1} \circ \phi^H \circ \psi_0} \simeq f_* K_{\phi^H}$, where $f$ is defined as 
		\[
			f \colon M^2 \times \bR \times I \to M^2 \times \bR \times I, \quad  (x_1,x_2,t,s) \mapsto (x_1,x_2,e^{-1}t,s).
		\]
	\end{lemma}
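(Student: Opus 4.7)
The plan is to invoke the Guillermou--Kashiwara--Schapira uniqueness theorem for sheaf quantization of Hamiltonian isotopies~\cite{G-K-S}: the sheaf $K_{\phi^H}$ is characterized up to unique isomorphism by its microsupport (the conic lift of the graph of $\phi^H$) together with the initial condition $K_{\phi^H}|_{s=0} \simeq \bfk_{\Delta_M \times [0,+\infty[}$. It therefore suffices to check that $f_* K_{\phi^H}$ satisfies the analogous two properties corresponding to the conjugate isotopy, and then conclude by uniqueness.

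By \cref{lem:hamiltonian_conformal} applied to $\psi = \psi_0$, which satisfies $\psi_0^*\lambda = e\lambda$ (so $a=e$), we have $\psi_0^{-1}\circ\phi^H_s\circ\psi_0 = \phi^{\tilde H}_s$ with $\tilde H_s = e^{-1} H_s\circ\psi_0$, so the target sheaf is $K_{\phi^{\tilde H}}$. For the microsupport check, since $f$ is a diffeomorphism one has $\MS(f_* K_{\phi^H}) = \tilde f(\MS(K_{\phi^H}))$, where the cotangent lift of $f$ reads
\[
\tilde f(x_1,x_2,t,s;\xi_1,\xi_2,\tau,\sigma) = (x_1,x_2,e^{-1}t,s;\xi_1,\xi_2,e\tau,\sigma).
\]
A generic microsupport point in $\{\tau>0\}$ is of the form $(x_1,x_2,t,s;\tau p_1,-\tau p_2,\tau,-\tau H_s(x_1,p_1))$ with $\phi^H_s(x_2,p_2)=(x_1,p_1)$ and $t$ the usual action value. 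Setting $\tau'=e\tau$ and $p_i'=p_i/e$, the condition $\phi^H_s(x_2,ep_2')=(x_1,ep_1')$ becomes exactly $(x_1,p_1')=\phi^{\tilde H}_s(x_2,p_2')$, and one computes
\[
-\tau H_s(x_1,p_1) = -\tau'\cdot e^{-1} H_s(\psi_0(x_1,p_1')) = -\tau'\, \tilde H_s(x_1,p_1'),
\]
so the $\sigma$-component is the one prescribed by $\tilde H$. The $e^{-1}$ rescaling of $t$ parallels the analogous rescaling of the action functional: a trajectory $(q,p')$ of $\phi^{\tilde H}$ corresponds to the trajectory $(q,ep')$ of $\phi^H$, and $\int p'\dot q - \tilde H\,ds = e^{-1}\int (ep')\dot q - H\,ds$. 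Hence $\MS(f_* K_{\phi^H})$ is precisely the conic lift of the graph of $\phi^{\tilde H}$.

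Finally, $f$ fixes $\{s=0\}$ and rescales $t$ by $e^{-1}$, which preserves both the diagonal and the half-line $[0,+\infty[$; thus $(f_* K_{\phi^H})|_{s=0}\simeq \bfk_{\Delta_M\times[0,+\infty[}$, matching the initial condition for $K_{\phi^{\tilde H}}$. GKS uniqueness then yields $f_* K_{\phi^H}\simeq K_{\phi^{\tilde H}} = K_{\psi_0^{-1}\circ\phi^H\circ\psi_0}$. The main obstacle I foresee is the microsupport bookkeeping in the second paragraph: aligning the paper's precise convention for $\MS(K_{\phi^H})$ (signs on the $\xi_i$, direction of the $t$-shift by the action, and handling of the zero-section contribution) with the explicit parametrization above, so that $\tilde f$ produces exactly $\MS(K_{\phi^{\tilde H}})$ rather than something equivalent only up to a shift or twist.
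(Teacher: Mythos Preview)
Your proposal is correct and follows essentially the same route as the paper: invoke the GKS uniqueness theorem after checking that $f_* K_{\phi^H}$ has the right microsupport and the right initial condition at $s=0$. The paper carries out exactly this plan, writing out both $\rMS(K_{\phi^{\tilde H}})$ and $\rMS(f_*K_{\phi^H})$ explicitly in terms of the action integral $u_{H,s}(p)=\int_0^s(H_{s'}-\lambda(X_{H_{s'}}))(\phi^H_{s'}(p))\,ds'$, and using \cref{lem:hamiltonian_conformal} to verify $u_{\tilde H,s}(p)=e^{-1}u_{H,s}(\psi_0(p))$---which is precisely your action-rescaling remark made explicit. One cosmetic difference: the paper's convention for the initial condition is $K_{\phi^H}|_{s=0}\simeq \bfk_{\Delta\times\{0\}}$ rather than $\bfk_{\Delta_M\times[0,+\infty[}$, but as you observe, the $t$-rescaling preserves either one, so the uniqueness argument goes through unchanged.
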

	
	\begin{proof}
		Set $\tilde{H} \coloneqq e^{-1} H \circ \psi_0$. 
		We also define 
		\[
			u_{H,s}(p) \coloneqq \int_0^s (H_{s'}-\lambda(X_{H_{s'}}))(\phi^H_{s'}(p)) \, ds'
		\]
		for $H \colon T^*M \times I \to \bR$ and $s \in I$.
		Then, by \cref{lem:hamiltonian_conformal}, we have
		\begin{align*}
			u_{\tilde{H},s}(p) 
			& = 
			\int_0^s (\tilde{H}_{s'}-\lambda(X_{\tilde{H}_{s'}}))(\psi_0^{-1}\phi^H_{s'} \psi_0(p)) \, ds' \\
			& = 
			e^{-1} \int_0^s (H_{s'}-\lambda(X_{H_{s'}}))(\phi^H_{s'}(\psi_0(p))) \, ds' \\
			& = 
			e^{-1} u_{H,s}(\psi_0(p)).
		\end{align*}
		
		We shall estimate the microsupports of $K_{\psi_0^{-1} \circ \phi^H \circ \psi_0}$ and $f_* K_{\phi^H}$.
		On the one hand, we have 
		\begin{align*}
			& \rMS(K_{\psi_0^{-1} \circ \phi^H \circ \psi_0}) \\
			= {} &  
			\left\{ ((x';\xi'), (x;-\xi), (u_{\tilde{H},s}(x;\xi/\tau),\tau), (s,-\tau \tilde{H}_s(\phi^{\tilde{H}}_s(x;\xi/\tau)))) 
			\; \middle| \; (x';\xi'/\tau) = \phi^{\tilde{H}}_s(x;\xi/\tau) \right\} \\
			= {} & 
			\begin{aligned}
			    & \left\{ ((x';\xi'), (x;-\xi), (e^{-1} u_{H,s}(x;e\xi/\tau),\tau), (s,-\tau e^{-1}H_s(\phi^{H}_s(x;e\xi/\tau)))) \right| \\
			    & \quad \left. (x';e\xi'/\tau) = \phi^{H}_s(x;e\xi/\tau) \right\}. 
			\end{aligned}
		\end{align*}
		On the other hand, we have 
		\begin{align*}
			& \rMS(f_* K_{\phi^H}) \\
			= {} &
            \begin{aligned}
                & \left\{ ((x';\xi'), (x;-\xi), (e^{-1}u_{H,s}(x;\xi/\tau),e\tau), (s,-\tau H_s(\phi^{H}_s(x;\xi/\tau)))) \right| \\
			    & \quad \left. (x';\xi'/\tau) = \phi^{H}_s(x;\xi/\tau) \right\}
            \end{aligned}
			\\
			= {} & 
            \begin{aligned}
                & \left\{ ((x';\xi'), (x;-\xi), (e^{-1}u_{H,s}(x;e\xi/\tilde{\tau}),\tilde{\tau}), (s,-\tilde{\tau} e^{-1} H_s(\phi^{H}_s(x;e\xi/\tilde{\tau})))) \right| \\ 
			    & \quad \left. (x';e\xi'/\tilde{\tau}) = \phi^{H}_s(x;e\xi/\tilde{\tau}) \right\}.
            \end{aligned} 
		\end{align*}
		Since $(f_* K_{\phi^H})|_{s=0} \simeq \bfk_{\Delta \times \{0\}}$, by the uniqueness of the sheaf quantization (\cite{G-K-S}), we conclude.
	\end{proof}
 
\begin{proof}[Proof of \cref{thm:nearbyconj_appendix}]
	By \cref{lem:SQ_conformal}, for $\varphi \in \DHam_c(T^*M)$, we have $\cK_{\psi_0^{-1} \circ \varphi \circ \psi_0} = f_* \cK_\varphi$, where $\cK_\varphi=Q(\varphi)$ and $f$ is defined, by abuse of notation, as
	\[
		f \colon M^2 \times \bR \to M^2 \times \bR, \quad (x_1,x_2,t) \mapsto (x_1,x_2,e^{-1}t).
	\]
	This implies, setting $h=\psi_0^{-1} \circ \psi_1 \in \DHam_c (T^*M)$, that the map 
	\[
		T \colon \DHam_c(T^*M) \to \DHam_c(T^*M), \quad \varphi \mapsto \psi_0^{-1} \circ \varphi \circ \psi_1 = \psi_0^{-1} \circ \varphi \circ \psi_0 \circ h
	\]
	is a contraction.
    Note that $h=\psi_0^{-1} \circ \psi_1 \in \DHam_c (T^*M)$ is proved in the proof of \cite[Theorem~7.4]{AHV2024higher}.
	Indeed, we have 
	\begin{align*}
		d_{\cT(T^*M^2)}(\cK_{T\varphi},\cK_{T\varphi'}) 
		& = 
		d_{\cT(T^*M^2)}(\cK_{\psi_0^{-1} \circ \varphi \circ \psi_0 \circ h}, \cK_{\psi_0^{-1} \circ \varphi' \circ \psi_0 \circ h}) \\
		& = 
		d_{\cT(T^*M^2)}(\cK_{\psi_0^{-1} \circ \varphi \circ \psi_0} \convstar \cK_h, \cK_{\psi_0^{-1} \circ \varphi' \circ \psi_0} \convstar \cK_h) \\
		& = 
		d_{\cT(T^*M^2)}(\cK_{\psi_0^{-1} \circ \varphi \circ \psi_0}, \cK_{\psi_0^{-1} \circ \varphi' \circ \psi_0}) \\
        & = 
        d_{\cT(T^*M^2)}(f_*\cK_\varphi,f_*\cK_{\varphi'}) \\
		& = 
		e^{-1} d_{\cT(T^*M^2)}(\cK_\varphi,\cK_{\varphi'}),
	\end{align*}  
	where the last equality follows from the fact that an $(a,b)$-isomorphism for $(\cK_\varphi,\cK_{\varphi'})$ gives an $(e^{-1}a,e^{-1}b)$-isomorphism for $(f_*\cK_\varphi,f_*\cK_{\varphi'})$ and vice versa.
	
	Hence, the map $T$ extends to the completion as a contraction, which we will also denote by $T \colon \widehat{\DHam_c}(T^*M) \to \widehat{\DHam_c}(T^*M)$. 
	Therefore, there exists a unique fixed point $\varphi_\infty \in \widehat{\DHam_c}(T^*M)$ so that $\psi_0^{-1}\varphi_\infty = \varphi_\infty \psi_1^{-1}$, where both sides should be understood as actions on $\widehat{\frakL}(T^*M)$. 
	Since $L\in \widehat{\mathfrak{L}}(T^*M)$ is the unique fixed point of the action of $\psi_1^{-1}$ and $0_M\in \widehat{\mathfrak{L}}(T^*M)$ is the unique fixed point of the action of $\psi_0^{-1}$, we find $\varphi_\infty(L) = 0_M$. 
 
	By its construction, the element $\varphi_\infty \in \widehat{\DHam_c}(T^*M)$ is represented as the sequence $(\varphi_n)_{n \in \bN}$ with $\varphi_n \coloneqq \psi_0^{-n} \circ \psi_1^n$.
	Thus, we obtain
	\[
		 \liminf_n \varphi_n(L) =
		 \liminf_n \psi_0^{-n} \circ \psi_1^n(L) = \liminf_n \psi_0^{-n}(L) = 0_M.
	\]
    Since taking the microsupport is ``continuous'' by~\cite[Proposition~6.26]{GV2022singular}, for any $F \in \cT_L(T^*M)$, we have 
	\[
		\RS(K_{\varphi_\infty}^\convstar(F)) 
		\subset \liminf_n \varphi_n(L) = 0_M,
	\]
	which proves the result.   
\end{proof}

\printbibliography

@article{Agnolo-cut-off,
	author = {Andrea D'Agnolo},
	date-added = {2025-02-20 11:42:04 +0100},
	date-modified = {2025-02-20 11:47:02 +0100},
	journal = {Journal of the Juliusz Schauder Center},
	pages = {161-167},
	title = {On the microlocal cut-off of Sheaves},
	volume = {8},
	year = {1996}}

@book{Cieliebak-Eliashberg,
	author = {Kai Cieliebak and Yakov Eliashberg},
	date-added = {2024-05-28 14:22:21 +0200},
	date-modified = {2024-05-28 14:22:21 +0200},
	publisher = {AMS, Providence RI},
	series = {Colloquium Publications},
	title = {From Stein to Weinstein and back. Symplectic Geometry and Affine complex manifolds},
	volume = {59},
	year = {2012}}

@book{K-S06,
	author = {Kashiwara, Masaki and Schapira, Pierre},
	date-added = {2021-12-14 15:42:10 +0100},
	date-modified = {2021-12-14 15:42:10 +0100},
	doi = {doi.org/10.1007/3-540-27950-4},
	publisher = {Springer-Verlag},
	series = {Grundlehren der Mathematische Wissenschaften},
	title = {Categories and Sheaves},
	volume = {332},
	year = {2006},
	bdsk-url-1 = {https://doi.org/10.1007/3-540-27950-4}}

@misc{nadler2016noncharacteristic,
	archiveprefix = {arXiv},
	author = {David Nadler},
	eprint = {1507.01513},
	primaryclass = {math.SG},
	title = {Non-characteristic expansions of Legendrian singularities},
	year = {2016}}

@article{AGHIV,
	author = {Tomohiro Asano and St\'ephane Guillermou and Vincent Humili\`ere and Yuichi Ike and Claude Viterbo},
	doi = {10.5802/crmath.499},
	journal = {Comptes Rendus. Math\'ematique},
	pages = {1333--1340},
	publisher = {Acad\'emie des sciences, Paris},
	title = {The $\gamma $-support as a micro-support},
	volume = {361},
	year = {2023},
	bdsk-url-1 = {https://doi.org/10.5802/crmath.499}}

@article{K-S-distance,
	title={Persistent homology and microlocal sheaf theory},
	author={Kashiwara, Masaki and Schapira, Pierre},
    journal = {J. Appl. Comput. Topol.}, 
	fjournal={Journal of Applied and Computational Topology},
	volume={2},
	number={1-2},
	pages={83--113},
	year={2018},
	publisher={Springer}}

@article{AI20,
	author = {Asano, Tomohiro and Ike, Yuichi},
    journal ={J. Symplectic Geom.}, 
	fjournal = {Journal of Symplectic Geometry},
	number = {3},
	pages = {613--649},
	publisher = {International Press of Boston},
	title = {Persistence-like distance on {T}amarkin's category and symplectic displacement energy},
	volume = {18},
	year = {2020}}

@misc{AHV2024higher,
	archiveprefix = {arXiv},
	author = {Marie-Claude Arnaud and Vincent Humili{\`e}re and Claude Viterbo},
	eprint = {2404.00804},
	primaryclass = {math.SG},
	title = {Higher Dimensional Birkhoff attractors},
	year = {2024}}

@article{Chacholski-Scherer,
	author = {Wojciech Chacholski and J{\'e}ro{\^o}me Scherer},
	date-added = {2023-04-28 11:58:19 +0200},
	date-modified = {2023-04-28 12:00:44 +0200},
	number = {736},
	publisher = {American Math. Soc.},
	journal = {Mem. Amer. Math. Soc.},
    fjournal = {Memoirs of the American Mathematical Society},
	title = {Homotopy theory of diagrams},
	volume = {155},
	year = {2002}}

@article{G-K-S,
	author = {Guillermou, St{\'e}phane and Kashiwara, Masaki and Schapira, Pierre},
	date-added = {2021-08-10 13:49:56 +0200},
	date-modified = {2021-08-10 13:49:56 +0200},
	doi = {10.1215/00127094-1507367},
	journal = {Duke Math. J. 161, no. 2 (2012), 201-245},
	number = {2},
	pages = {201-245},
	title = {Sheaf quantization of Hamiltonian isotopies and applications to non-displaceability problems},
	volume = {161},
	year = {2012},
	bdsk-url-1 = {https://doi.org/10.1215/00127094-1507367}}

@article{GV2022singular,
	author = {Guillermou, Stephane and Viterbo, Claude},
	title = {The Singular Support of Sheaves is $\gamma$-Coisotropic},
    journal = {Geom. Funct. Anal.},
    fjournal = {Geometric and Functional Analysis},
	pages = {1052-1113},
	volume = {34},
	year = {2024}
}

@article{Hovey,
	author = {Mark Hovey},
	date-added = {2023-04-28 12:24:25 +0200},
	date-modified = {2023-04-28 12:27:26 +0200},
	doi = {10.1090/S0002-9947-01-02721-0},
	journal = {Trans. Amer. Math. Soc.},
    fjournal = {Transactions of the American Mathematical Society},
	number = {6},
	pages = {2441-2457},
	title = {Model category structures on chain complexes of sheaves},
	volume = {353},
	year = {2001},
	bdsk-url-1 = {https://doi.org/10.1090/S0002-9947-01-02721-0}}

@article{AI22completeness,
	author = {Asano, Tomohiro and Ike, Yuichi},
    journal = {Math. Ann.},
	fjournal = {Mathematische Annalen},
	title = {Completeness of derived interleaving distances and sheaf quantization of non-smooth objects},
	year = {2024}}

@misc{viterbo2022supports,
	author = {Viterbo, Claude},
	eprint = {2204.04133},
	eprinttype = {arXiv},
	primaryclass = {math.SG},
	title = {On the supports in the {H}umili\`ere completion and $\gamma$-coisotropic sets},
	year = {2022}}

@book{KS90,
	author = {Kashiwara, Masaki and Schapira, Pierre},
	publisher = {Springer-Verlag},
	series = {Grundlehren der Math. Wissenschaften},
	title = {Sheaves on manifolds},
	volume = {292},
	year = {1990}}

@inproceedings{Tamarkin,
	address = {Cham},
	author = {Tamarkin, Dmitry},
	booktitle = {Algebraic and Analytic Microlocal Analysis},
	editor = {Hitrik, Michael and Tamarkin, Dmitry and Tsygan, Boris and Zelditch, Steve},
	pages = {99--223},
	publisher = {Springer International Publishing},
	title = {Microlocal Condition for Non-displaceability},
	year = {2018}}

@article{guillermou19,
	author = {Guillermou, St{\'e}phane},
	journal = {Ast\'erisque},
	title = {Sheaves and symplectic geometry of cotangent bundles},
	volume = {440},
	year = {2023}}

@article{Seidel-graded,
	author = {Seidel, Paul},
	date-added = {2022-11-01 18:38:50 +0100},
	date-modified = {2022-11-01 18:38:50 +0100},
	journal = {Bulletin de la S. M. F.},
	number = {1},
	pages = {103-149},
	title = {Graded Lagrangian submanifolds},
	volume = {128},
	year = {2000},
	bdsk-url-1 = {http://www.numdam.org/item?id=BSMF_2000__128_1_103_0}}

@misc{Viterbo-Sheaves,
	author = {Viterbo, Claude},
	date-added = {2022-11-01 18:44:42 +0100},
	date-modified = {2022-11-01 18:44:42 +0100},
	eprint = {1901.09440},
	eprinttype = {arXiv},
	primaryclass = {math.SG},
	title = {Sheaf Quantization of Lagrangians and Floer cohomology},
	year = {2019}}

@article{Humiliere-completion,
	author = {Humili{\`e}re, Vincent},
	date-added = {2022-11-04 20:03:55 +0100},
	date-modified = {2022-11-04 20:03:55 +0100},
	doi = {10.24033/bsmf.2560},
	journal = {Bulletin de la Soc. Math. de France},
	number = {3},
	pages = {373-404},
	title = {On some completions of the space of Hamiltonian maps.},
	volume = {136},
	year = {2008},
	bdsk-url-1 = {http://www.numdam.org/item/BSMF_2008__136_3_373_0/},
	bdsk-url-2 = {https://doi.org/10.24033/bsmf.2560}}

@article{viterbo1992,
	author = {Viterbo, Claude},
	doi = {10.1007/BF01444643},
	fjournal = {Mathematische Annalen},
	issn = {0025-5831},
	journal = {Math. Ann.},
	mrclass = {58F05 (53C15 53C23 57R50 58E05)},
	mrnumber = {1157321},
	mrreviewer = {Nikolai K. Smolentsev},
	number = {4},
	pages = {685--710},
	title = {Symplectic topology as the geometry of generating functions},
	volume = {292},
	year = {1992},
	bdsk-url-1 = {https://doi.org/10.1007/BF01444643}}

@misc{Viterbo-Homogenization,
	archiveprefix = {arXiv},
	author = {Viterbo, Claude},
	copyright = {arXiv.org perpetual, non-exclusive license},
	eprint = {0801.0206},
	keywords = {Symplectic Geometry (math.SG), Analysis of PDEs (math.AP), Dynamical Systems (math.DS), Optimization and Control (math.OC), FOS: Mathematics, FOS: Mathematics, 37J05, 53D35 (Primary) 35F20, 49L25, 37J40, 37J50 (Secondary)},
	primaryclass = {math.SG},
	publisher = {arXiv},
	title = {Symplectic Homogenization},
	year = {2008}}

@article{Oh,
	author = {Oh, Yong-Geun},
	fjournal = {Journal of Differential Geometry},
	issn = {0022-040X},
	journal = {J. Differential Geom.},
	mrclass = {58E05 (57R70 58D15 58F05)},
	mrnumber = {1484890},
	mrreviewer = {Joa Weber},
	number = {3},
	pages = {499--577},
	title = {Symplectic topology as the geometry of action functional. {I}. {R}elative {F}loer theory on the cotangent bundle},
	volume = {46},
	year = {1997},
	bdsk-url-1 = {http://projecteuclid.org/euclid.jdg/1214459976}}

@article{M-V-Z,
	author = {Monzner, Alexandra and Vichery, Nicolas and Zapolsky, Frol},
	doi = {10.3934/jmd.2012.6.205},
	fjournal = {Journal of Modern Dynamics},
	issn = {1930-5311},
	journal = {J. Mod. Dyn.},
	mrclass = {53Dxx},
	mrnumber = {2968955},
	number = {2},
	pages = {205--249},
	title = {Partial quasimorphisms and quasistates on cotangent bundles, and symplectic homogenization},
	volume = {6},
	year = {2012},
	bdsk-url-1 = {https://doi.org/10.3934/jmd.2012.6.205}}

@article{H-L-S,
	author = {Humili\`ere, Vincent and Leclercq, R\'{e}mi and Seyfaddini, Sobhan},
	doi = {10.24033/asens.2292},
	fjournal = {Annales Scientifiques de l'\'{E}cole Normale Sup\'{e}rieure. Quatri\`eme S\'{e}rie},
	issn = {0012-9593},
	journal = {Ann. Sci. \'{E}c. Norm. Sup\'{e}r. (4)},
	mrclass = {53D40 (53D20)},
	mrnumber = {3503828},
	mrreviewer = {Umberto Leone Hryniewicz},
	number = {3},
	pages = {633--668},
	title = {Reduction of symplectic homeomorphisms},
	volume = {49},
	year = {2016},
	bdsk-url-1 = {https://doi.org/10.24033/asens.2292}}

@article{Kuowrap,
	author = {Kuo, Christopher},
	doi = {10.1016/j.aim.2023.108882},
    journal = {Adv. Math.},
	fjournal = {Advances in Mathematics},
	issn = {0001-8708,1090-2082},
	mrclass = {18F20 (32C38 53D37 53D40 55N30)},
	mrnumber = {4543073},
	mrreviewer = {Mee\ Seong\ Im},
	pages = {Paper No. 108882, 71},
	title = {Wrapped sheaves},
	volume = {415},
	year = {2023},
	bdsk-url-1 = {https://doi.org/10.1016/j.aim.2023.108882}}

@misc{KSZ23,
	archiveprefix = {arXiv},
	author = {Christopher Kuo and Vivek Shende and Bingyu Zhang},
	eprint = {2312.11447},
	primaryclass = {math.SG},
	title = {On the Hochschild cohomology of Tamarkin categories},
	year = {2023}}

@article{Bingyucut-off,
	author = {Bingyu Zhang},
	title = {Non-linear microlocal cut-off functors},
	year={2025},
    journal={Rend. Sem. Mat. Univ. Padova},
    note={published online first}
}

@article{bokstedt1993homotopy,
	author = {B{\"o}kstedt, Marcel and Neeman, Amnon},
    journal = {Compositio Math.},
	fjournal = {Compositio Mathematica},
	number = {2},
	pages = {209--234},
	title = {Homotopy limits in triangulated categories},
	volume = {86},
	year = {1993}}

@misc{Viterbo-inverse-reduction,
	arxiveprefix = {arXiv},
	author = {Viterbo, Claude},
	date = {2022},
	date-added = {2022-07-18 15:57:04 +0200},
	date-modified = {2022-07-18 15:57:04 +0200},
	eprint = {2203.13172},
	eprintclass = {math.SG},
	eprinttype = {arXiv},
	primaryclass = {math.SG},
	title = {Inverse reduction inequalities for spectral numbers and applications},
	year = {2022}}

@article {zhang2024capacities,
    AUTHOR = {Zhang, Bingyu},
     TITLE = {Capacities from the {C}hiu-{T}amarkin complex},
   JOURNAL = {J. Symplectic Geom.},
  FJOURNAL = {The Journal of Symplectic Geometry},
    VOLUME = {22},
      YEAR = {2024},
    NUMBER = {3},
     PAGES = {441--524},
      ISSN = {1527-5256,1540-2347},
   MRCLASS = {53D05 (53D50)},
  MRNUMBER = {4819505},
       DOI = {10.4310/jsg.241001211759},
       URL = {https://doi.org/10.4310/jsg.241001211759},
}

@article {Chiu17,
    AUTHOR = {Chiu, Sheng-Fu},
     TITLE = {Nonsqueezing property of contact balls},
   JOURNAL = {Duke Math. J.},
  FJOURNAL = {Duke Mathematical Journal},
    VOLUME = {166},
      YEAR = {2017},
    NUMBER = {4},
     PAGES = {605--655},
      ISSN = {0012-7094},
   MRCLASS = {53D35 (53D10 53D50)},
  MRNUMBER = {3619302},
MRREVIEWER = {Stefan Berceanu},
       DOI = {10.1215/00127094-3715517},
       URL = {https://doi.org/10.1215/00127094-3715517},
}

@misc{zhang2023idempotence,
      title={Idempotence of microlocal kernels and $S^1$-equivariant Chiu-Tamarkin invariant}, 
      author={Bingyu Zhang},
      year={2023},
      eprint={2306.12316},
      archivePrefix={arXiv},
      primaryClass={math.SG}
}

\end{document}